\definecolor{CherryRed}{rgb}{0.7,0.1,0.1}
\definecolor{EgyptBlue}{rgb}{0.1,0.1,0.6}
\numberwithin{equation}{section}
\newtheorem{thm}{Theorem}[section]
\theoremstyle{plain}
\newtheorem{lem}[thm]{Lemma}
\theoremstyle{plain}
\newtheorem{prop}[thm]{Proposition}
\theoremstyle{plain}
\theoremstyle{plain}
\theoremstyle{definition}
\newtheorem{rem}[thm]{Remark}
\newcommand{\R}{{\mathbb R}}
\DeclareMathOperator*{\esssup}{ess\,sup}
\title{On the Cheeger problem for rotationally invariant domains}
\author[V. Bobkov]{Vladimir Bobkov$^*$}
\author[E. Parini]{Enea Parini}
\address[E. Parini]{Aix-Marseille Univ, CNRS, Centrale Marseille, I2M, 39 Rue Frederic Joliot Curie, 13453 Marseille, France}\email{enea.parini@univ-amu.fr}
\address[V. Bobkov]{Department of Mathematics and NTIS, Faculty of Applied Sciences, University of West Bohemia, Univerzitn\'i 8, 301 00 Plze\v{n}, Czech Republic\\
Institute of Mathematics, Ufa Federal Research Centre, RAS, Chernyshevsky str. 112, 450008 Ufa, Russia} \email{bobkov@matem.anrb.ru}
\thanks{$^*$\textit{Corresponding author}: bobkov@matem.anrb.ru}
\subjclass[2010]{49Q15; 49Q10; 53A10; 49Q20}
\keywords{Cheeger problem, Cheeger constant, body of revolution, rotationally invariant domain, Delaunay surfaces, constant mean curvature}
\begin{document}
\begin{abstract}
	We investigate the properties of the Cheeger sets of rotationally invariant, bounded domains $\Omega \subset \mathbb{R}^n$. For a rotationally invariant Cheeger set $C$, the free boundary $\partial C \cap \Omega$ consists of pieces of Delaunay surfaces, which are rotationally invariant surfaces of constant mean curvature. We show that if $\Omega$ is convex, then the free boundary of $C$ consists only of pieces of spheres and nodoids. This result remains valid for nonconvex domains when the generating curve of $C$ is closed, convex, and of class $\mathcal{C}^{1,1}$. Moreover, we provide numerical evidence of the fact that, for general nonconvex domains, pieces of unduloids or cylinders can also appear in the free boundary of $C$.
\end{abstract}

\maketitle

\section{Introduction}

Let $\Omega$ be a bounded domain in $\mathbb{R}^n$, $n \geq 2$. 
The \textit{Cheeger problem} consists in finding subsets $C$ of $\Omega$ which solve the minimization problem
\begin{equation}\label{eq:cheeger}
h(\Omega) = \inf_{E \subset \Omega} \frac{P(E)}{|E|},
\end{equation}
where $P(E)=P(E;\mathbb{R}^n)$ is the distributional perimeter of a subset $E$ of $\Omega$ measured with respect to $\mathbb{R}^n$ and $|E|$ stands for the Lebesgue measure of $E$. The value of $h(\Omega)$ is called \textit{Cheeger constant} of $\Omega$, and any minimizer $C$ of \eqref{eq:cheeger} is called \textit{Cheeger set} of $\Omega$. 
An overview of general properties of the Cheeger problem, such as the existence of the Cheeger set and its regularity, can be found in surveys \cite{leonardi,Parini2011}, see also Section \ref{sec:cheeger} below.
Let us particularly note that the Cheeger set always exists and if $\partial C \cap \Omega$ is nonempty, then it is a constant mean curvature surface (CMC surface) with mean curvature
\begin{equation}\label{eq:H}
H = \frac{h(\Omega)}{n-1}.
\end{equation}
Hereinafter, $\partial C \cap \Omega$ will be called \textit{free boundary} of $C$. 

Despite the geometric nature of the problem \eqref{eq:cheeger}, an explicit analytical description of Cheeger sets is, in general, a difficult task. 
Such description is relatively well-established in the planar case, thanks to the fact that the only planar CMC surface is a circular arc, see \cite{kawohllachand,kreicirik,leonardisaracco} and references therein. 
In particular, if $\Omega \subset \mathbb{R}^2$ is convex, then its Cheeger set $C$ is unique and can be characterized by ``rolling'' a disk $B_r(x)$ inside $\Omega$: 
\begin{equation}\label{eq:cunion}
C = \bigcup\limits_{x \in \Omega^r} B_r(x),
\end{equation}
where $r = \frac{1}{h(\Omega)}$ and $\Omega^r = \{x \in \Omega: \text{dist}(x, \partial \Omega) \geq r\}$, see \cite{kawohllachand}. 
On the other hand, in the higher dimensional case $n \geq 3$, there is a big variety of CMC surfaces, and a characterization of $C$ by ``rolling'' a ball inside $\Omega$ as in \eqref{eq:cunion} can be violated, see \cite[Remark 13]{kawohlfridman}.
Moreover, an explicit characterization of Cheeger sets seems to be known only for some particular domains such as ellipsoids of low eccentricity \cite{ACC}, spherical shells \cite{demengel}, and thin tubular neighbourhoods of smooth closed curves \cite{KLV}, while even for three-dimensional cubes the problem is open \cite{kawohl}. 

The aim of the present work is to make a step towards a better understanding of the Cheeger problem in higher dimensions by investigating the class of domains $\Omega$ which are rotationally invariant with respect to a given vector. 
To the best of our knowledge, the only work explicitly related to this setting is due to Rosales \cite{ros}, where the author studied a general isoperimetric problem under the rotational symmetry of $\Omega$. We will indicate several results from \cite{ros} in more details below.
Assume that some Cheeger set $C$ of such domain $\Omega$ inherits the rotational symmetry and the free boundary $\partial C \cap \Omega$ is nonempty. Then $\partial C \cap \Omega$ consists of pieces of the so-called \textit{Delaunay surfaces}, i.e., rotationally invariant CMC surfaces. 
The class of these surfaces was described by Delaunay in $\mathbb{R}^3$, see \cite{Ken,HY} for a discussion and an $n$-dimensional generalization.  

In this paper, we deal with the problem of determining which types of Delaunay surfaces can constitute the free boundary of Cheeger sets of rotationally invariant domains. 
In principle, the only Delaunay surfaces of positive mean curvature are spheres, nodoids, unduloids, and cylinders. 
In Theorem \ref{thm:main}, we prove that if $\Omega$ is a convex, rotationally invariant domain, then the free boundary of its (unique) Cheeger set consists only of pieces of spheres or nodoids. 
This result is generalized in Proposition \ref{prop:unlul-nonexist1} to the case of nonconvex domains which admit Cheeger sets whose generating curve is closed, convex, and of class $\mathcal{C}^{1,1}$.
Moreover, we provide numerical evidence of the fact that, for general nonconvex domains, pieces of unduloids or cylinders can indeed appear in the free boundary of their Cheeger sets. 
Investigation of the Cheeger problem for several model domains (cylinders, cones, double cones) complement our analysis.

\section{Preliminaries and main results}\label{sec:prelim}
We start by reviewing some basic facts about domains of revolution and Delaunay surfaces.
Let $\gamma:[a,b]\to \mathbb{R} \times \mathbb{R}^+$ with $\gamma(s)=(x(s),y(s))$ be a $\mathcal{C}^{1,1}$-curve parametrized by its arc-length, and let $\sigma: [a,b] \to [-\pi,\pi]$ be the angle between the tangent to $\gamma(s)$ and the positive $x$-direction. This implies that the normal vector to $\gamma$ is given by $(\sin \sigma, -\cos \sigma)$. Since $\gamma$ is of class $\mathcal{C}^{1,1}$, we have that $\sigma$ is a Lipschitz-continuous function, and hence it is almost everywhere differentiable.
Let $M \subset \R^n$ ($n \geq 3$) be the embedded surface obtained by rotating the graph of $\gamma$ around the $x$-axis. More precisely, $M$ is invariant with respect to the actions of $SO(n-1)$ fixing the $x$-axis and is defined as
\begin{equation*}\label{eq:M}
M
= 
\left\{
(x(s), y(s) \mathbb{S}^{n-2}) \in \mathbb{R}^n:~ a\leq s \leq b
\right\},
\end{equation*}
where $\mathbb{S}^{n-2} \subset \mathbb{R}^{n-1}$ is an $(n-2)$-sphere. 
Since $M$ is also of class $\mathcal{C}^{1,1}$, the principal curvatures are defined at $\mathcal{H}^{n-1}$-almost every point of $M$ due to Rademacher's theorem. 
Moreover, the mean curvature $H$ of $M$ is a bounded function which coincides with the distributional mean curvature of $M$ as defined in \cite[Section 17.3]{maggi}. 
We have the explicit formula
$$
H(s) = \frac{1}{n-1}\left(-\sigma'(s) + (n-2)\frac{\cos \sigma(s)}{y(s)}\right)
$$ 
for the mean curvature at the point $(x(s), y(s)\mathbb{S}^{n-2})$, measured with respect to the normal vector $(\sin{\sigma},-\cos{\sigma})$. 
Therefore, $\gamma$ can be characterized as a solution of the system
\begin{equation}\label{eq:system}
\left\{ 
\begin{array}{r c l} x'(s) & = & \cos{\sigma(s)},\\ 
y'(s) & = & \sin{\sigma(s)},\\ 
\sigma'(s) & = & -(n-1)H(s) + (n-2)\frac{\cos{\sigma(s)}}{y(s)}.
\end{array}
\right. 
\end{equation}
We refer to \cite[Section 4]{HMR} and \cite[Section 2]{ros} for the case of $\mathcal{C}^2$-surfaces in $\mathbb{R}^n$, and to \cite[Section 2]{dalphinmasnouhenrottakahashi} for the case of $\mathcal{C}^{1,1}$-surfaces in $\mathbb{R}^3$.

Assume now that $\gamma$ can be locally described as a function $y=y(x)$ for $x \in (\alpha,\beta)$. 
In view of Rademacher's theorem, the mean curvature at $\mathcal{H}^{n-1}$-almost every point $(x,y(x)\mathbb{S}^{n-2})$ of the corresponding portion of $M$ can be expressed as
$$
H(x) = \frac{1}{n-1}\left(-\frac{y''}{(1+y'^2)^{3/2}}  +  \frac{(n-2)}{y(1+y'^2)^{1/2}}\right).
$$ 
Equivalently, $y$ satisfies the equation
\begin{equation}\label{eq:hy14}
\frac{y''}{(1+y'^2)^{3/2}} 
- 
\frac{(n-2)}{y(1+y'^2)^{1/2}} + (n-1) H(x) = 0.
\end{equation}

\subsection{Delaunay surfaces}\label{sec:delaunay}
Throughout this subsection, we assume that the mean curvature $H$ of $M$ is a nonnegative constant. 
In this case, $M$ is called \textit{Delaunay surface}, and the system \eqref{eq:system} possesses the first integral
\begin{equation}\label{eq:firstint}
y^{n-2} \cos \sigma - H y^{n-1} = T,
\end{equation}
where $T$ is a constant. It is easy to see that if $H>0$, then
\[ 
T \leq \frac{1}{(n-1)^{n-1}}\left(\frac{n-2}{H}\right)^{n-2}.
\]

Note that in the particular case of $\mathbb{R}^3$, $(x(s), y(s))$ can be conveniently expressed as
\begin{equation}\label{eq:kenmotsu}
(x(s),y(s)) 
= 
\left(
\int_0^s \frac{1+B \cos (2Ht)}{(1+B^2+2B\cos (2Ht))^{1/2}} \, dt,
\frac{(1+B^2+2B\cos (2Hs))^{1/2}}{2H}
\right)
\end{equation}
provided $H \neq 0$, where $B$ is a constant, see \cite[Section 2]{Ken}.
The advantage of \eqref{eq:kenmotsu} consists in the fact that $x(0)=0$, $y(0)=\frac{1+B}{2H}$, and $\frac{dy}{dx}(0)=0$. 
In the case of the equation \eqref{eq:hy14}, the first integral is
\begin{equation}\label{cyl1:hy15}
\frac{y^{n-2}}{(1+y'^2)^{1/2}} - H y^{n-1} = c
\end{equation} 
for some constant $c$, whenever $y' \neq 0$, cf.\ \cite[Eq.\ (5)]{HY}. 
This equation can be resolved as
\begin{equation*}\label{cyl1:hy151}
y' = \pm \left[\left(\frac{y^{n-2}}{c+Hy^{n-1}}\right)^2-1\right]^{1/2}
\end{equation*}
or, in terms of $x=x(y)$, as
\begin{equation}\label{cyl1:hy2}
x(y) = x_0 \pm \int_{y(x_0)}^y \left[\left(\frac{t^{n-2}}{c+Ht^{n-1}}\right)^2-1\right]^{-1/2} \, dt.
\end{equation}

\medskip
Analysing the system \eqref{eq:system} and taking into account its first integral \eqref{eq:firstint} and the fact that $\gamma$ maps to $\mathbb{R} \times \mathbb{R}^+$, we deduce that any solution $\gamma$ of \eqref{eq:system} generates a portion of one of the following six types of Delaunay surfaces which are depicted on Figures \ref{fig:del1}, \ref{fig:del2}, \ref{fig:del3}, cf.\ \cite[Proposition 4.3]{HMR}:
\begin{enumerate}[label={\rm(\roman*)}]
	\item If $H > 0$ and 
	$$
	T = \frac{1}{(n-1)^{n-1}}\left(\frac{n-2}{H}\right)^{n-2},
	$$
	then $\gamma(s)=\left(s,\frac{n-2}{(n-1)H}\right)$, i.e., $\gamma$ generates a \textit{cylinder}.
	\item If $H > 0$ and 
	$$
	0 < T < \frac{1}{(n-1)^{n-1}}\left(\frac{n-2}{H}\right)^{n-2},
	$$
	then $\gamma$ generates an \textit{unduloid}.
	It holds 
	\begin{equation}\label{eq:unduloid_maximum}
	0 < \min_{s \in \mathbb{R}} y(s) < \frac{n-2}{(n-1)H} < \max_{s \in \mathbb{R}} y(s) < \frac{1}{H}.
	\end{equation}
	\item If $H > 0$ and $T=0$, then $\gamma$ generates a \textit{sphere} of radius $\frac{1}{H}$ centered on the $x$-axis. 
	\item If $H > 0$ and $T<0$, then $\gamma$ generates a \textit{nodoid}.
	It holds 
	\begin{equation*}\label{eq:nodoid_maximum}
	0 < \min_{s \in \mathbb{R}} y(s) < \left(\frac{-T}{H}\right)^{\frac{1}{n-1}} \quad \text{and} \quad \max_{s \in \mathbb{R}} y(s) > \max\left\{\frac{1}{H}, \left(\frac{-T}{H}\right)^{\frac{1}{n-1}}\right\}.
	\end{equation*}
	\item If $H = 0$ and $T \neq 0$, then $\gamma$ generates a \textit{catenoid}.
	\item If $H = 0$ and $T=0$, then $\gamma(s) = (\text{const},s)$, i.e., $\gamma$ generates a \textit{hyperplane} perpendicular to the $x$-axis.
\end{enumerate}

Hereinafter, for convenience of writing, we will identify Delaunay surfaces with their respective generating curves.

 \begin{figure}[ht]
 	\centering
 	\includegraphics[width=0.6\linewidth]{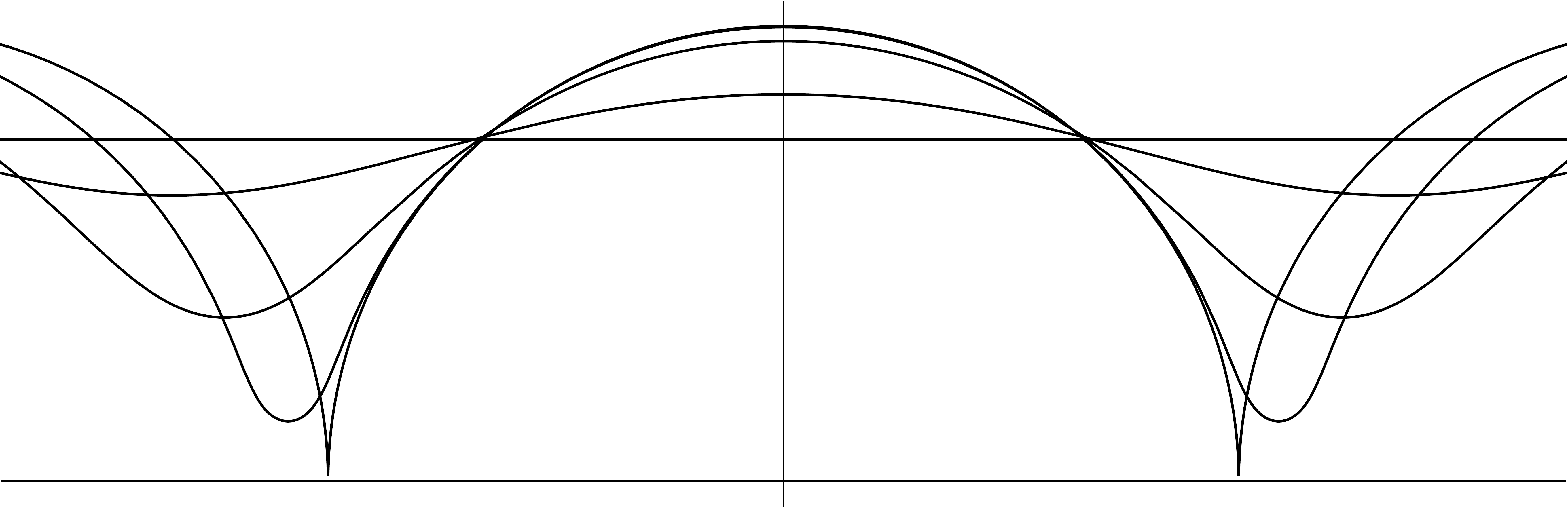}
 	\caption{$n=5$. Generating curves of a cylinder, unduloids, and spheres.}
 	\label{fig:del1}
 \end{figure}

 \begin{figure}[ht]
	\centering
	\includegraphics[width=0.6\linewidth]{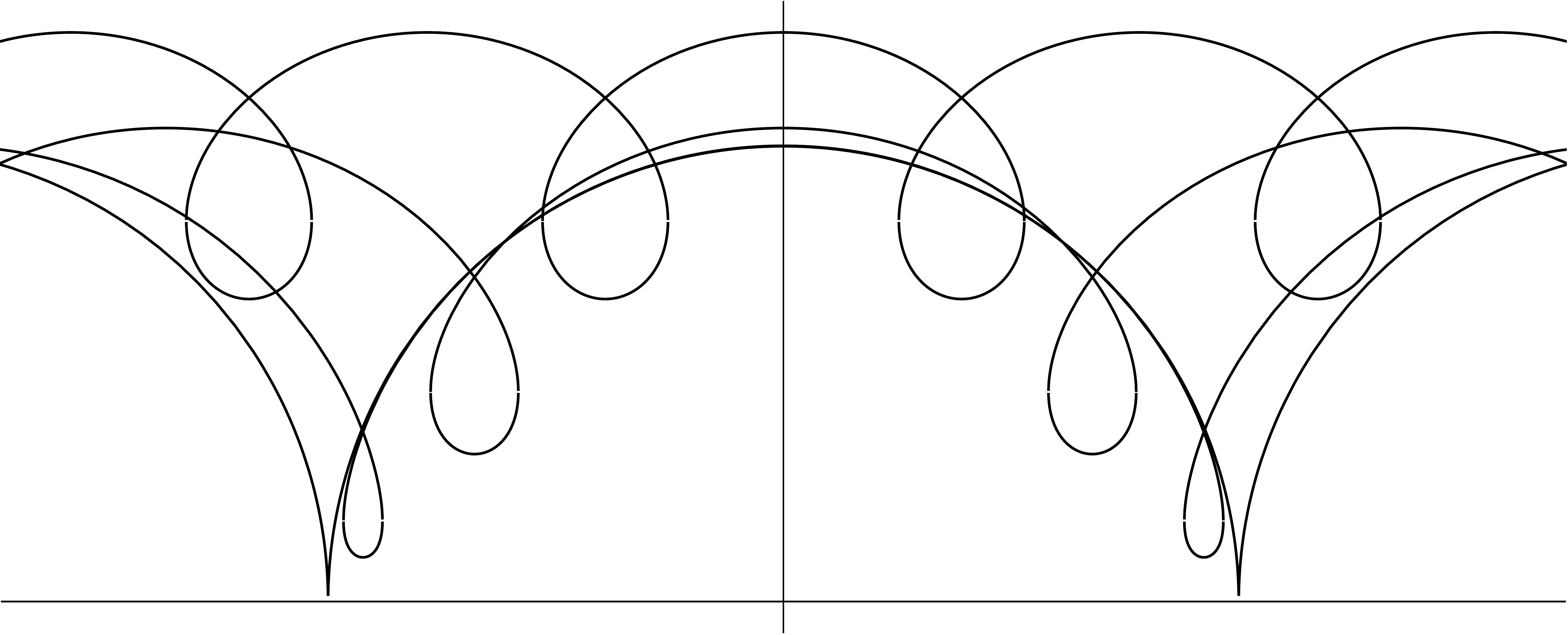}
	\caption{$n=5$. Generating curves of nodoids and spheres.}
	\label{fig:del2}
\end{figure}

 \begin{figure}[ht]
	\centering
	\includegraphics[width=0.6\linewidth]{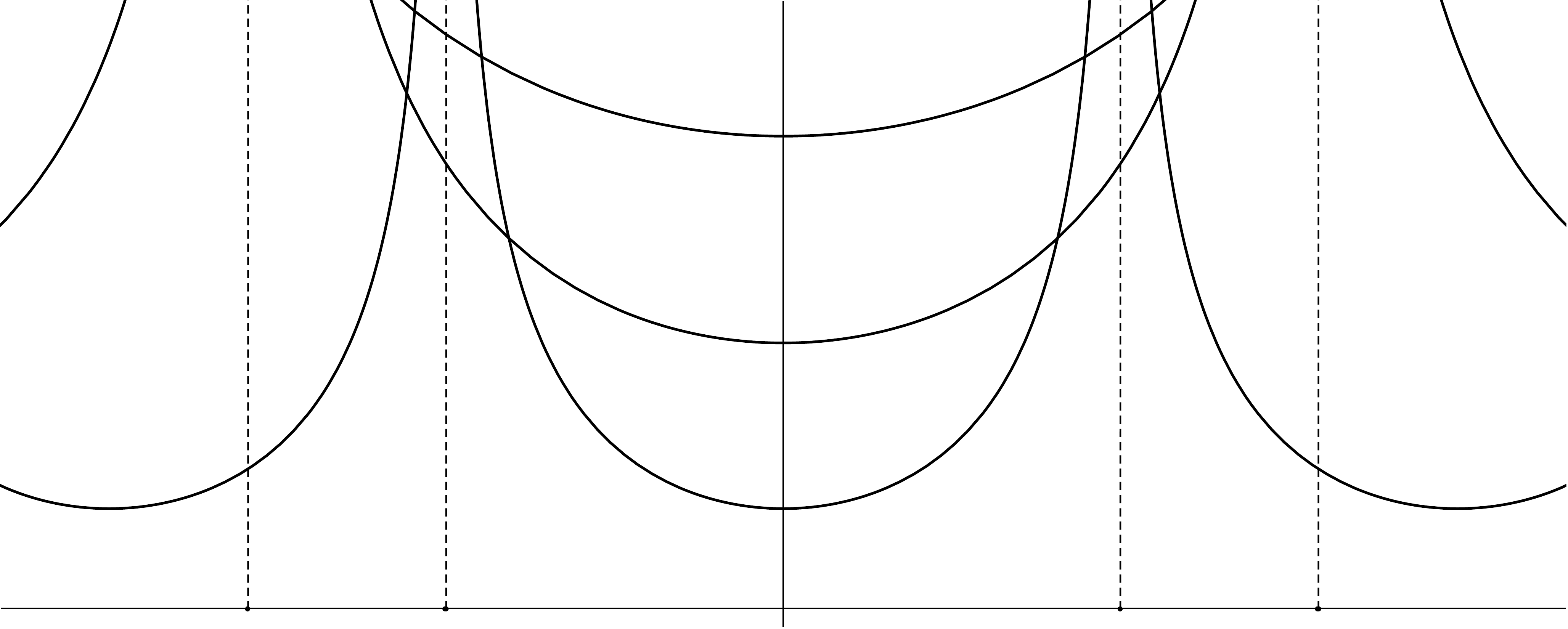}
	\caption{$n=5$. Generating curves of catenoids and vertical hyperplanes (dashed). Simultaneously, the dashed vertical lines are asymptotes of the corresponding catenaries.}
	\label{fig:del3}
\end{figure}

\subsection{Cheeger problem}\label{sec:cheeger}
We start by collecting several general regularity properties of Cheeger sets.
\begin{prop}\label{prop:reg}
Let $\Omega \subset \mathbb{R}^n$ be an open, bounded set. 
If $C$ is a Cheeger set of $\Omega$, then it satisfies the following regularity properties:
\begin{enumerate}[label={\rm(\roman*)}]
 \item\label{prop:reg:1} $\partial C \cap \Omega$ is analytic, except possibly for a singular set of Hausdorff dimension at most $n-8$. At regular points, the mean curvature is constant and equal to $\frac{h(\Omega)}{n-1}$.
 \item\label{prop:reg:2} If $\partial \Omega$ is of class $\mathcal{C}^{1,1}$ in a neighbourhood of a point $x \in \partial \Omega \cap \partial C$, then $\partial C$ is also of class $\mathcal{C}^{1,1}$ around $x$.
\end{enumerate}
\end{prop}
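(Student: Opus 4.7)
The plan is to derive both regularity statements from classical regularity theory for perimeter minimizers, applied to the obstacle-type variational problem that characterizes Cheeger sets.

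For \ref{prop:reg:1}, the first step is to observe that a Cheeger set $C$ minimizes $P(\cdot;\mathbb{R}^n)$ among all measurable $E \subset \Omega$ with $|E|=|C|$. Indeed, if some such $E$ had $P(E) < P(C)$, then $P(E)/|E| < P(C)/|C| = h(\Omega)$, contradicting the definition of $h(\Omega)$. Hence $C$ is a \emph{constrained} perimeter minimizer; by a standard Lagrange-multiplier argument (absorbing the volume constraint into a bulk term with bounded coefficient), $C$ is also a local $\Lambda$-minimizer of the perimeter inside $\Omega$. Applying the De~Giorgi--Federer--Almgren interior regularity theorem (see, e.g., Maggi's book, Part IV, or the discussions in \cite{leonardi,Parini2011}), one concludes that $\partial^* C \cap \Omega$ is analytic and the singular set $(\partial C \setminus \partial^* C) \cap \Omega$ has Hausdorff dimension at most $n-8$. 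The identification of the mean curvature with $h(\Omega)/(n-1)$ then follows from computing the first variation of the ratio $P(E)/|E|$ under a one-parameter family of compactly supported normal perturbations of $\partial C$ in $\Omega$: the Euler--Lagrange equation forces the scalar mean curvature (with our normalization $H = \frac{1}{n-1}\sum \kappa_i$) to equal the Lagrange multiplier $h(\Omega)/(n-1)$.

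For \ref{prop:reg:2}, the setting is a boundary regularity problem at points $x \in \partial C \cap \partial \Omega$. The strategy is to view $C$ as the solution of an obstacle problem: among sets in $\mathbb{R}^n$ of mass $|C|$ that are contained in the open set $\Omega$ (the obstacle being $\mathbb{R}^n \setminus \Omega$), $C$ minimizes the perimeter. Near $x$, I would locally flatten $\partial \Omega$ by a $\mathcal{C}^{1,1}$ diffeomorphism, reducing the problem to a perimeter minimizer constrained to a half-space with a $\mathcal{C}^{1,1}$-equivalent metric. The classical obstacle-problem regularity theory for perimeter minimizers (Caffarelli, Sternberg--Williams--Ziemer, and later refinements quoted in \cite{leonardi,Parini2011}) then guarantees that $\partial C$ inherits the $\mathcal{C}^{1,1}$ regularity of the obstacle at contact points. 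Transporting the conclusion back through the diffeomorphism preserves the $\mathcal{C}^{1,1}$ class.

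The main technical obstacle is the second part: while the interior regularity in \ref{prop:reg:1} is a direct consequence of well-known $\Lambda$-minimizer theory, the $\mathcal{C}^{1,1}$ boundary regularity requires the correct obstacle-problem formulation and the full strength of curvature comparison at contact points. The intuition behind the optimal exponent is that at $x \in \partial C \cap \partial \Omega$, the mean curvature of $\partial C$ is $h(\Omega)/(n-1)$ (from the free part of the boundary, extended by maximum-principle-type arguments) while the contact forces $\partial C$ to lie on one side of $\partial \Omega$; this two-sided bound on curvature is exactly what yields $\mathcal{C}^{1,1}$ regularity and cannot, in general, be improved. In practice I would not rederive this from scratch but cite the appropriate regularity result from the Cheeger-problem literature.
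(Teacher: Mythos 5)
Your proposal is correct and follows essentially the same route as the paper: part (i) is exactly the volume-constrained/$\Lambda$-minimizer regularity of Gonzalez--Massari--Tamanini that the paper cites (together with the first-variation identification of the mean curvature), and part (ii) is the obstacle-problem boundary regularity underlying the cited results of Caselles--Chambolle--Novaga and Stredulinsky--Ziemer. The paper's proof consists of precisely these citations, so your sketch merely unpacks the standard arguments behind them.
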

\begin{proof}
Property \ref{prop:reg:1} is proven in \cite[Theorem 2]{gonzalezmassaritamanini}. Property \ref{prop:reg:2} follows by reasoning as in \cite[Theorem 2]{caselleschambollenovaga}, taking into account \cite[Theorem 3.5 (v)]{bobkovparini}.
\end{proof} 

In general, Cheeger sets need not be unique, as can be seen from various examples (see, for instance, \cite[Remark 12]{kawohlfridman} or \cite[Example 4.6]{leonardipratelli}). Nevertheless, if $\Omega$ is convex, then it admits a unique Cheeger set, which is convex, and whose boundary is of class $\mathcal{C}^{1,1}$ \cite{AC}.

\medskip
With the help of information provided above, let us consider the Cheeger problem \eqref{eq:cheeger} in a rotationally invariant, bounded domain $\Omega \subset \mathbb{R}^n$. 
We assume that the boundary $\partial\Omega$ of $\Omega$ can be characterized as
\begin{equation*}\label{eq:Omega}
\partial\Omega 
= 
\left\{
(\xi(s), \eta(s) \mathbb{S}^{n-2}) \in \mathbb{R}^n:~ \alpha \leq s \leq \beta
\right\},
\end{equation*}
where the generating curve $\Gamma : [\alpha,\beta]\to \R \times \R^+$ defined as $\Gamma(s)=(\xi(s),\eta(s))$ is continuous, does not have self-intersections, and is either closed or satisfies $\eta(\alpha)=\eta(\beta)=0$.

The following result can be proved in much the same way as \cite[Lemma 3.1]{ros}\footnote{Although the paper \cite{ros} deals with rotationally invariant, strictly convex domains, the convexity assumption is not used in the proof of \cite[Lemma 3.1]{ros}.}.
\begin{lem}\label{lem:1}
	Let $\Omega \subset \mathbb{R}^n$ be a rotationally invariant, bounded domain, and let $C$ be its Cheeger set. 
	If $C$ is also rotationally invariant, then $\partial C \cap \Omega$ is analytic.
\end{lem}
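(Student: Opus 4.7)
The plan is to combine the partial regularity of Cheeger sets from Proposition~\ref{prop:reg}\ref{prop:reg:1} with the rotational symmetry of $C$ and the classification of Delaunay surfaces from Subsection~\ref{sec:delaunay}. By that proposition, $\partial C \cap \Omega$ is analytic outside a closed singular set $\Sigma$ with $\dim_{\mathcal{H}}(\Sigma) \le n-8$, so the claim is immediate for $2 \le n \le 7$. For $n \ge 8$ I argue by contradiction: suppose $\Sigma \neq \emptyset$ and pick $p \in \Sigma$.

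Since $C$ is invariant under the $SO(n-1)$-action fixing the $x$-axis, so is its topological boundary, and hence also the singular set $\Sigma$. If $p$ does not lie on the $x$-axis, its orbit $SO(n-1)\cdot p$ is a round $(n-2)$-sphere contained in $\Sigma$, forcing $\dim_{\mathcal{H}}(\Sigma) \ge n-2$. This contradicts the bound $\dim_{\mathcal{H}}(\Sigma) \le n-8$, so every point of $\Sigma$ must lie on the $x$-axis.

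To exclude axis singularities, observe that near such a $p$ the set $(\partial C \cap \Omega) \setminus \Sigma$ is a rotationally invariant analytic hypersurface of constant mean curvature $H = h(\Omega)/(n-1) > 0$, and its generating curve $\gamma$ is therefore of one of the types (i)--(vi) in Subsection~\ref{sec:delaunay}. Cases (v) and (vi) require $H = 0$ and are ruled out. Cases (i), (ii), (iv) all satisfy $\min_s y(s) > 0$, by the explicit cylinder parameter, by~\eqref{eq:unduloid_maximum}, and by the analogous nodoid bound; the corresponding pieces do not touch the axis at all. Hence any Delaunay piece approaching $p$ must be of type (iii), i.e.\ a sphere of radius $1/H$ centred on the axis, whose generating arc hits the axis perpendicularly and whose rotation is analytic across its poles. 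The main obstacle is then to rule out the possibility that several such spherical caps are glued at $p$ into a genuine vertex; but since all incident caps have the same tangent hyperplane at $p$ (perpendicular to the axis) and the same radius $1/H$, the uniqueness of solutions of the ODE system~\eqref{eq:system} forces them to lie on a single round sphere. Consequently, $p$ is a regular point of $\partial C \cap \Omega$, contradicting $p \in \Sigma$.
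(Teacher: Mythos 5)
Your overall strategy is the one the paper itself relies on: the paper proves this lemma simply by referring to \cite[Lemma 3.1]{ros}, whose argument is precisely the combination of the partial regularity in Proposition~\ref{prop:reg}~\ref{prop:reg:1} with the $SO(n-1)$-invariance of the singular set and an analysis of the generating curve near the axis. Your first two steps are correct: the case $n\le 7$ is immediate, and for $n\ge 8$ the orbit of an off-axis singular point is an $(n-2)$-sphere, incompatible with $\dim_{\mathcal{H}}(\Sigma)\le n-8$. The reduction of axis points to spherical pieces via the first integral \eqref{eq:firstint} is also sound (any connected Delaunay arc whose closure meets the axis must have $T\to 0$ along it, hence $T=0$, hence it is an arc of a sphere of radius $1/H$), modulo the standard perimeter density estimates, which you should at least mention, since they are what guarantees that only finitely many components of $\partial C\cap\Omega$ can meet a small ball around $p$ rather than infinitely many tiny nodoidal or unduloidal pieces accumulating there.

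The genuine gap is in your last sentence. The system \eqref{eq:system} is singular at $y=0$ because of the term $(n-2)\cos\sigma/y$, and uniqueness of solutions through an axis point in fact \emph{fails}: for fixed $H>0$ there are exactly two spheres of radius $1/H$ centred on the axis and passing through $p$, namely those centred at the two axis points at distance $1/H$ from $p$, and both of their generating arcs reach $p$ with the same vertical tangent $\sigma=\pm\pi/2$. Equivalently, written as graphs $x=u(y)$ over the common tangent hyperplane, the two caps solve the CMC equation with opposite signs of $H$ relative to a fixed normal, so no uniqueness statement applies. Hence the ``kissing caps'' configuration --- two spherical caps tangent at $p$ from opposite sides --- is not excluded by your argument, and it must be ruled out variationally. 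For instance, if $C$ locally contains the two tangent balls, then filling the cusp $\bigl\{|x-x_p|\le \tfrac{H}{2}|y|^2+o(|y|^2),\ |y|\le\delta\bigr\}$ removes two spherical caps of area of order $\delta^{n-1}$ while adding only $O(\delta^{n})$ of lateral boundary and $O(\delta^{n+1})$ of volume, so it strictly decreases the ratio $P(\cdot)/|\cdot|$ for small $\delta$, contradicting the minimality of $C$; alternatively, the lower volume density estimate for the complement of a perimeter almost-minimizer is violated at $p$. With that step replaced, your proof is complete and matches the intended argument.
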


Notice that the existence of a rotationally invariant Cheeger set required in Lemma \ref{lem:1} can be guaranteed if $\Omega$ is a Schwarz symmetric domain with respect to the $x$-axis, that is, if the intersection of $\Omega$ with any hyperplane $\{x= \text{const}\}$ is either empty or an $(n-1)$-dimensional ball centered on the $x$-axis. Indeed, let $C$ be a Cheeger set of $\Omega$, and let $C^*$ be the Schwarz symmetrization of $C$, as defined in \cite[Section 19.2]{maggi}. 
Then $C^*$ is still a subset of $\Omega$ and 
\[ 
P(C^*) \leq P(C) \quad \text{and} \quad |C^*|=|C|
\]
by \cite[Theorem 19.11]{maggi}, which implies that $C^*$ is also a Cheeger set of $\Omega$.

More can be said if we additionally ask $\Omega$ to be convex.
\begin{lem} \label{propertiesconvex}
	Let $\Omega \subset \mathbb{R}^n$ be a convex, rotationally invariant, bounded domain.	Then $\Omega$ admits a unique Cheeger set, which is convex, rotationally invariant, and with boundary of class $\mathcal{C}^{1,1}$.
\end{lem}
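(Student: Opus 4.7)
The plan is to obtain three of the four conclusions directly from the cited result of Alter--Caselles, and then to deduce rotational invariance separately by a symmetrization argument, using uniqueness to identify the symmetrized Cheeger set with the original one.

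First, I would invoke the result of \cite{AC} quoted right after Proposition \ref{prop:reg}: since $\Omega$ is a convex bounded domain, it admits a unique Cheeger set $C$, which is convex and whose boundary is of class $\mathcal{C}^{1,1}$. This gives existence, uniqueness, convexity, and the $\mathcal{C}^{1,1}$-regularity of $\partial C$ at once. The only property remaining to prove is that $C$ is rotationally invariant about the $x$-axis.

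Second, I would observe that a convex rotationally invariant domain is automatically Schwarz symmetric with respect to the $x$-axis. Indeed, for any $c \in \mathbb{R}$, the slice $\Omega \cap \{x = c\}$ is convex (as the intersection of convex sets) and invariant under the $SO(n-1)$-action fixing the $x$-axis; a convex, rotationally invariant subset of an $(n-1)$-dimensional hyperplane is either empty or an $(n-1)$-dimensional ball centered on the $x$-axis. Hence the hypothesis of the discussion following Lemma \ref{lem:1} applies.

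Third, I would form the Schwarz symmetrization $C^*$ of $C$ relative to the $x$-axis. By \cite[Theorem 19.11]{maggi}, $|C^*| = |C|$ and $P(C^*) \leq P(C)$, and by Schwarz symmetry of $\Omega$ one still has $C^* \subset \Omega$. Consequently $C^*$ is a Cheeger set of $\Omega$. By the uniqueness already established, $C^* = C$, so $C$ is rotationally invariant, completing the proof.

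I do not anticipate a real obstacle here, since the statement is essentially a packaging of known results. The only substantive point to verify carefully is the reduction of ``convex plus rotationally invariant'' to ``Schwarz symmetric'', so that the symmetrization of the Cheeger set is again contained in $\Omega$; everything else is immediate from \cite{AC}, \cite{maggi}, and the observation preceding the statement.
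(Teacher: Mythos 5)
Your proof is correct, and its overall shape matches the paper's: quote known results for existence, uniqueness, convexity, and $\mathcal{C}^{1,1}$-regularity, then derive rotational invariance from uniqueness. (The paper's own proof is a one-line citation, distributing the credit slightly differently: existence of a \emph{convex} Cheeger set to \cite[Remark 10]{kawohlfridman}, uniqueness to \cite[Theorem 1]{AC}, and regularity to \cite{stredulinskyziemer}; your attribution of all three to \cite{AC} is consistent with the paper's own summary after Proposition \ref{prop:reg}, so this is bookkeeping rather than mathematics.) Where you genuinely diverge is the last step. You obtain rotational invariance by checking that a convex rotationally invariant domain is Schwarz symmetric, symmetrizing $C$, and invoking uniqueness to get $C^*=C$. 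This works, and your verification that the slices of $\Omega$ are balls centered on the axis (via convex hulls of $SO(n-1)$-orbits) is the one point that needed care. But the symmetrization machinery is not needed here: since the Cheeger set is unique and $R(\Omega)=\Omega$ for every rotation $R$ fixing the $x$-axis, $R(C)$ is again a Cheeger set of $\Omega$ and hence $R(C)=C$ directly. That two-line argument is what the paper implicitly relies on; your route buys the marginally stronger conclusion that $C$ is Schwarz symmetric, though for a convex set this is equivalent to rotational invariance anyway. Note also that the Schwarz symmetrization argument is the one the paper reserves for the \emph{nonconvex} case (the discussion after Lemma \ref{lem:1}), precisely because uniqueness is unavailable there.
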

\begin{proof}
    The result is a consequence of the existence of a convex Cheeger set, which was obtained in \cite[Remark 10]{kawohlfridman}, of the uniqueness result proven in \cite[Theorem 1]{AC}, and of the regularity properties obtained in \cite{stredulinskyziemer}.
\end{proof}

Let us now characterize Delaunay surfaces which can form the free boundary of a Cheeger set $C$.
Since the mean curvature of $\partial C \cap \Omega$ is positive, it is clear that neither catenoids nor vertical hyperplanes can appear in the free boundary of $C$. 
In the following theorem, we show that unduloids and the cylinder cannot be parts of $\partial C \cap \Omega$, too, provided $\Omega$ is convex.

\begin{thm}\label{thm:main}
	Let $\Omega \subset \mathbb{R}^n$ be a convex, rotationally invariant, bounded domain. Let $C$ be the Cheeger set of $\Omega$. Then each connected component of $\partial C \cap \Omega$ is a part of a sphere or a nodoid.
\end{thm}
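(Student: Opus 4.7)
The plan is to exploit the extra structure granted by Lemma~\ref{propertiesconvex}: the Cheeger set $C$ is convex, rotationally invariant, and with $\mathcal{C}^{1,1}$ boundary, so its generating curve $\gamma$ is a $\mathcal{C}^{1,1}$ arc from one axis endpoint to the other which, together with the intercepted axis segment, bounds a convex planar region. Parametrizing $\gamma$ by arc length oriented from the left axis point to the right one, convexity is equivalent to the monotonicity condition $\sigma'(s) \leq 0$ a.e., with $\sigma$ decreasing from $\pi/2$ to $-\pi/2$. Since $H = h(\Omega)/(n-1) > 0$, cases (v) and (vi) of Section~\ref{sec:delaunay} (catenoids and vertical hyperplanes) are excluded immediately. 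The remaining task is to rule out cylinders and unduloids.

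\textbf{Cylinders.} A cylindrical component would correspond to a horizontal segment $\gamma|_{[x_1,x_2]} \equiv \{y = (n-2)/((n-1)H)\}$. Since $\partial C$ is $\mathcal{C}^{1,1}$ and touches $\partial \Omega$ at the endpoints of such a component, the tangent of $\gamma$ matches the tangent of $\Gamma$ at $(x_1, y_c)$ and $(x_2, y_c)$, where $y_c = (n-2)/((n-1)H)$. Hence $\Gamma$ has a horizontal tangent at two points of equal height. Since $\Gamma$ is the upper boundary of a convex domain and thus locally the graph of a concave function, a concave function with horizontal tangent at two endpoints of equal value is forced to be constant between them. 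This would imply $\gamma \equiv \Gamma$ on $[x_1, x_2]$, contradicting the assumption that the piece belongs to the \emph{free} boundary $\partial C \cap \Omega$.

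\textbf{Unduloids.} Suppose, toward a contradiction, that a connected component of $\partial C \cap \Omega$ is an unduloid piece. On it, the ODE \eqref{eq:system} combined with convexity forces
$$
(n-2)\,\frac{\cos \sigma(s)}{y(s)} \leq (n-1) H
$$
at every point. At a local extremum of $y$, where $\cos \sigma = 1$, this reads $y \geq (n-2)/((n-1)H)$. By the characterization \eqref{eq:unduloid_maximum}, however, the local minima of $y$ on an unduloid satisfy $y_{\min} < (n-2)/((n-1)H)$, so such a minimum cannot occur on the free boundary piece. Consequently, the piece is contained in a single ``bulge'' of the unduloid, i.e. in an arc where $\sigma$ is monotone, with $y$ attaining at most the value $y_{\max}$.

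\textbf{The delicate step.} What remains is to eliminate the possibility that the free boundary component is a strict sub-arc of a single unduloid bulge, with both endpoints tangent to $\Gamma$. The approach I would take is to combine the first integral $T = y^{n-2}\cos \sigma - H y^{n-1} > 0$ on the unduloid with the tangency condition at the two contact points $p_1, p_2$, and with the concavity of $\Gamma$. A helpful computation here is that along any curve (not necessarily Delaunay) satisfying \eqref{eq:system} with the Cheeger value of $H$ one has
$$
\frac{d}{ds}\bigl(y^{n-2}\cos \sigma - H y^{n-1}\bigr) = (n-1)\, y^{n-2} \sin \sigma\, \bigl(H_\Omega(s) - H\bigr)
$$
on the contact parts of $\gamma$, where $H_\Omega$ is the mean curvature of $\partial\Omega$; since the Cheeger inequality forces $H_\Omega \geq H$ wherever $\gamma$ touches $\Gamma$, the sign of this derivative is controlled by $\sin \sigma$, and the value $T$ must be recovered from integrating against the contact arcs connecting the unduloid piece to the axis. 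The main obstacle, which I expect to be the hardest part, is to convert this signed constraint, together with the tangential matching $\sigma_\gamma(p_i) = \sigma_\Gamma(p_i)$ and the monotonicity of $\sigma$ on $\gamma$, into a contradiction with $T > 0$; the simpler subcase already handled (a piece containing a local minimum of $y$) suggests that the right comparison is between the first-integral value at $p_1$ and the horizontal-tangent point of $\gamma$ (necessarily unique by monotonicity of $\sigma$), which is compatible with $T > 0$ only when the global generating curve degenerates to a sphere.
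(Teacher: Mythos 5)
You have correctly set up the problem (via Lemma~\ref{propertiesconvex}), correctly disposed of catenoids and hyperplanes, and your computation of the derivative of the first integral along $\gamma$ is exactly the identity \eqref{eq:Tprime} on which the paper's proof rests. However, the proposal is not a proof: the step you yourself label ``the delicate step'' --- excluding a free boundary component that is a strict sub-arc of a single unduloid bulge, tangent to $\Gamma$ at both ends --- is the entire content of the theorem for unduloids, and it is left as an acknowledged obstacle rather than resolved. The auxiliary arguments you do complete (the cylinder tangency argument, and the observation that an unduloid neck cannot lie on the free boundary) only shrink the remaining case; they do not close it.

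Moreover, the one inequality you propose to use to close it has the wrong sign, and this is fatal to the strategy as stated. Since every $F\subset C$ is an admissible competitor in $\Omega$, the Cheeger set is a one-sided minimizer of $E\mapsto P(E)-h(\Omega)|E|$ under inward perturbations, which yields $\esssup_s H(s)\le \frac{h(\Omega)}{n-1}$ for the mean curvature of all of $\partial C$ (this is the content of \cite[Proposition 2.1]{bellettinichambollenovaga}); hence on the contact set one has $H_\Omega\le H$, not $H_\Omega\ge H$ as you assert (compare with the planar picture, where the Cheeger set detaches from $\partial\Omega$ precisely where the curvature of $\partial\Omega$ \emph{exceeds} $h(\Omega)$). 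With your sign, \eqref{eq:Tprime} gives $T'\ge 0$ where $\sin\sigma\ge 0$, so integrating up from the axis (where $T=0$) produces $T\ge 0$ --- which is \emph{consistent} with an unduloid ($T>0$) rather than contradictory. With the correct sign the whole theorem follows at once and your case distinctions become unnecessary: define $T(s)=y^{n-2}\cos\sigma-\frac{h(\Omega)}{n-1}y^{n-1}$ globally on $[a,b]$; convexity gives $T(a)=T(b)=0$ and a single $c$ with $\sin\sigma\ge 0$ on $[a,c]$ and $\sin\sigma\le 0$ on $[c,b]$, whence $T$ is nonincreasing then nondecreasing, so $T\le 0$ on all of $[a,b]$. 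Since cylinders and unduloids require $T>0$, only spheres and nodoids can appear. This global first-integral argument is the paper's proof, and it is the piece missing from your write-up.
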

\begin{proof}
By Lemma \ref{propertiesconvex}, $\Omega$ admits a unique Cheeger set $C$, which is convex, rotationally invariant, and with boundary of class $\mathcal{C}^{1,1}$. Let $\gamma: [a,b]\to \R \times \R^+$ defined by $\gamma(s)= (x(s),y(s))$ be a generating curve of $C$. 
Recall that $\gamma$ satisfies the system \eqref{eq:system}. Moreover, for every open ball $B_r$, and for every $F \subset C$ such that $F \bigtriangleup C \subset \subset B_r$, by minimality of the Cheeger set it holds
\[ P(C;B_r) - h(\Omega)|C \cap B_r| \leq P(F;B_r) - h(\Omega)|F \cap B_r|,\]
where, for a set $E \subset \R^n$, $P(E;B_r)$ is the distributional perimeter of $E$ measured with respect to $B_r$. Therefore, by \cite[Proposition 2.1]{bellettinichambollenovaga} the mean curvature satisfies 
\[ \esssup_{s \in [a,b]} H(s) \leq \frac{h(\Omega)}{n-1},\] where the equality holds true for any $s$ such that $(x(s), y(s) \mathbb{S}^{n-2}) \in \partial C \cap \Omega$ (see Proposition \ref{prop:reg}). 
Define the function $T:[a,b]\to \R$ as
\begin{equation*}\label{eq:T}
T(s)= y^{n-2}\cos{\sigma} - \frac{h(\Omega)}{n-1} y^{n-1}.
\end{equation*}
Notice that $T$ is constant on every connected component of $\partial C \cap \Omega$ and coincides there with \eqref{eq:firstint}.
By the regularity of $C$, $T$ is a Lipschitz-continuous function, and therefore it is differentiable almost everywhere on $[a,b]$ with the derivative 
\begin{align}
\notag 
T'(s) & = (n-2)y^{n-3}y'\cos{\sigma} - y^{n-2}\sigma' \sin{\sigma} - h(\Omega) y^{n-2} y'
\\ 
\notag
& = (n-2)y^{n-3}\sin{\sigma}\cos{\sigma} - y^{n-2}\left(-(n-1)H(s) + (n-2)\frac{\cos{\sigma}}{y}\right)\sin{\sigma} 
- h(\Omega)y^{n-2} \sin{\sigma}
\\
\label{eq:Tprime}
&= (n-1)y^{n-2}\left(H(s)-\frac{h(\Omega)}{n-1}\right)\sin{\sigma}.
\end{align}
That is, $T' \leq 0$ if $\sin{\sigma} \geq 0$, and $T' \geq 0$ if $\sin{\sigma} \leq 0$. 
In view of the convexity of $C$, we can assume, without loss of generality, that $y(a)=y(b)=0$, and hence $T(a) = T(b) = 0$.
On the other hand, by the convexity and regularity of $C$, there exists $c \in (a,b)$, such that $\sin{\sigma} \geq 0$ on $[a,c]$, and $\sin{\sigma} \leq 0$ on $[c,b]$.
It then follows from the fundamental theorem of calculus that $T(s) \leq 0$ for all $s \in [a,b]$, which implies that the free boundary $\partial C \cap \Omega$ consists of spheres or nodoids.
\end{proof}

The above proof can be generalized, under suitable assumptions on $C$, to the case of nonconvex, rotationally invariant domains.
\begin{prop}\label{prop:unlul-nonexist1}
	Let $\Omega \subset \mathbb{R}^n$ be a rotationally invariant, bounded domain. 
	Suppose that $\Omega$ admits a rotationally invariant Cheeger set $C$ generated by a closed, convex curve $\gamma :[a,b] \to \R \times \R^+$ of class $\mathcal{C}^{1,1}$. 
	Then each connected component of $\partial C \cap \Omega$ is a part of a nodoid.
\end{prop}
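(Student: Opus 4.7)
My plan is to mimic the proof of Theorem \ref{thm:main}, with the closedness of $\gamma$ compensating for the fact that $\gamma$ no longer touches the $x$-axis. The variational inequality $H(s) \leq h(\Omega)/(n-1)$ almost everywhere on $[a,b]$, with equality on the preimages of the free boundary, is derived exactly as in Theorem \ref{thm:main}, since that derivation uses only the minimality of $C$ and the $\mathcal{C}^{1,1}$ regularity of $\gamma$, and not the convexity of $\Omega$. I would then define the same function
\[
T(s) = y(s)^{n-2}\cos\sigma(s) - \frac{h(\Omega)}{n-1} y(s)^{n-1},
\]
and the same computation as in \eqref{eq:Tprime} shows that $T$ is non-increasing where $\sin\sigma \geq 0$ and non-decreasing where $\sin\sigma \leq 0$.

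Since $\gamma$ is closed and convex, $\sigma$ is monotonic along $\gamma$ and changes by $2\pi$. Orienting $\gamma$ compatibly with the sign convention of Section \ref{sec:delaunay} (under which the generating curve of a sphere has $T=0$), I would parameterize $\gamma$ starting at its point of minimum $y$, where the horizontal tangent gives $\sigma(a) = \pi$. As $s$ increases, $\sigma$ decreases monotonically, passing through $\sigma(c) = 0$ at the point of maximum $y$, and returning to $\sigma(b) = -\pi$ at the starting point. Under this parameterization $\sin\sigma \geq 0$ on $[a,c]$ and $\sin\sigma \leq 0$ on $[c,b]$, so the monotonicity of $T$ gives $T(s) \leq T(a) = T(b)$ for every $s \in [a,b]$.

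The key new input compared to Theorem \ref{thm:main} is that, since $\gamma \subset \mathbb{R} \times \mathbb{R}^+$ is a closed curve, its minimum height $y_{\min} = y(a) = y(b)$ is \emph{strictly} positive. Combined with $\cos\sigma(a) = -1$, this gives
\[
T(a) = -y_{\min}^{n-2}\left(1 + \frac{h(\Omega)}{n-1}\, y_{\min}\right) < 0,
\]
and hence $T(s) < 0$ for all $s \in [a,b]$. In particular, on every connected component of $\partial C \cap \Omega$ the first integral \eqref{eq:firstint} is constant and strictly negative, so by the Delaunay classification of Section \ref{sec:delaunay} each such component is a portion of a nodoid. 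The main technical care concerns fixing the orientation of $\gamma$ to match the sign conventions of the Delaunay classification; once this bookkeeping is handled, the rest of the argument is a direct adaptation of the proof of Theorem \ref{thm:main}.
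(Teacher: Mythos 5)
Your argument reproduces the paper's proof almost step for step: the same function $T$, the same variational bound $H \le \frac{h(\Omega)}{n-1}$, the same monotonicity of $T$ from \eqref{eq:Tprime} together with the convexity of $\gamma$, and the same evaluation of $T$ at the point of minimal height, where $\cos\sigma=-1$. In the case $y(a)=y(b)>0$ your computation $T(a)=-y_{\min}^{n-2}\bigl(1+\frac{h(\Omega)}{n-1}y_{\min}\bigr)<0$ and the conclusion $T<0$ on all of $[a,b]$ coincide exactly with the first case of the paper's proof, and that part is correct.

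The gap is the assertion that $y_{\min}$ is \emph{strictly} positive ``since $\gamma\subset\R\times\R^+$ is closed''. In this paper $\R\times\R^+$ is the codomain used for generating curves that are allowed to touch the axis of revolution (the generating curve of a sphere has $y=0$ at its endpoints, and the general setup of Section \ref{sec:prelim} explicitly permits $\eta(\alpha)=\eta(\beta)=0$); when strict positivity is intended it is written as $\R\times(0,+\infty)$, as in Proposition \ref{lem:unlul-nonexist}. Hence the case where the closed convex curve $\gamma$ contains a segment lying on the $x$-axis is admissible and is not covered by your argument: there $y(a)=0$ forces $T(a)=0$, the monotonicity only yields $T\le 0$, and one must still exclude the possibility that a connected component of $\partial C\cap\Omega$ with $T=0$ is a piece of a sphere --- note that the proposition claims ``nodoid'', not ``sphere or nodoid'' as in Theorem \ref{thm:main}. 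The paper closes this case as follows: if $T$ vanished on a nondegenerate parameter interval on which $y>0$, the first integral \eqref{eq:firstint} would force that arc of $\gamma$ to be part of a sphere of radius $\frac{n-1}{h(\Omega)}$ centred on the $x$-axis; since $\cos\sigma=\frac{h(\Omega)}{n-1}\,y\to 0$ along such an arc as $y\to 0$, it meets the axis with a vertical tangent, which is incompatible with the horizontal tangent of $\gamma$ at its point of minimal height (or along its segment on the axis) and thus with the assumed $\mathcal{C}^{1,1}$ regularity. Adding this observation would complete your proof.
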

\begin{proof}
Since $\gamma$ is of class $\mathcal{C}^{1,1}$, we can define the function $T$ as in the proof of Theorem \ref{thm:main}.
In view of the fact that $\gamma$ is closed, we can assume, without loss of generality, that $x(a)=x(b)$ and $y(a)=y(b)=\min_{s \in [a,b]} y(s)$. 
Then the convexity and regularity of $\gamma$ yield the existence of $c \in (a,b)$ such that $\sin{\sigma} \geq 0$ on $[a,c]$, and $\sin{\sigma} \leq 0$ on $[c,b]$. Since $\cos{\sigma(a)}=\cos{\sigma(b)}=-1$, we have $T(a)=T(b) \leq 0$, and hence $T(s) \leq 0$ for all $s \in [a,b]$, see \eqref{eq:Tprime}.
If $y(a)>0$, then $T(a)<0$, which implies that $T(s) < 0$ for all $s \in [a,b]$, and the claim of the proposition follows.
Assume that $y(a)=0$. Let $a_1$ and $b_1$ be such that $a \leq a_1 < b_1 \leq b$ and $y(s)=0$ for $s \in [a,a_1] \cup [b_1,b]$ and $y(s)>0$ for $s \in (a_1,b_1)$.
Moreover, let $a_2$ and $b_2$ be such that $a \leq a_1 \leq a_2 \leq b_2 \leq b_1 \leq b$ and $T(s)=0$ for $s \in [a,a_2] \cup [b_2,b]$ and $T(s)<0$ for $s \in (a_2,b_2)$.
If $a_2>a_1$ or $b_2<b_1$, then $\gamma(s)$ for $s \in (a_1,a_2)$ or $s \in (b_2,b_1)$ describes a part of a sphere of radius $\frac{n-1}{h(\Omega)}$ centred on the $x$-axis, which contradicts the regularity of $\gamma$.
Thus, $T(s)<0$ for $s \in (a_1,b_1)$, which completes the proof.
\end{proof}

\begin{rem}
We observe that the above results do not use the fact that $C$ is a Cheeger set, but rather its regularity, a convexity assumption, and the boundedness of the mean curvature. 
If $\Omega$ is smooth, strictly convex, and rotationally invariant, then these properties hold true also for rotationally invariant solutions of the isoperimetric problem
\begin{equation}\label{isoperimetricset} 
	\min \{ P(F):~ F \subset \Omega,\,|F|=V \}
\end{equation}
for fixed $V \in (0,|\Omega|)$. 
Indeed, if $E$ is a rotationally invariant minimizer for \eqref{isoperimetricset}, then $\partial E$ is of class $\mathcal{C}^{1,1}$, convex, and has bounded mean curvature (see \cite[Theorems 1.1, 2.1, and Lemma 3.1]{ros}). 
Therefore, Theorem \ref{thm:main} is an extension of \cite[Lemma 3.4]{ros} about the absence of pieces of unduloids of negative Gauss-Kronecker curvature.
\end{rem}

The claim of Proposition \ref{prop:unlul-nonexist1} can be also obtained provided that the generating curve of $\Omega$ is sufficiently high over the $x$-axis.
Namely, recall from \eqref{eq:unduloid_maximum} that the maximal ordinate of any unduloid of the mean curvature $H$ is less than $\frac{1}{H}$, and a sphere of the same mean curvature has the radius $\frac{1}{H}$.
Therefore, in view of \eqref{eq:H}, the following result takes place.
\begin{prop}\label{lem:unlul-nonexist}
	Let $\Omega \subset \mathbb{R}^n$ be a rotationally invariant, bounded domain, generated by a closed curve $\Gamma :[\alpha,\beta] \to \R \times (0,+\infty)$ given by $\Gamma=(\xi(s),\eta(s))$ such that
	\begin{equation}\label{eq:prop_undul0}
	\min_{\alpha \leq s \leq \beta} \eta(s) \geq \frac{n-1}{h(\Omega)}.
	\end{equation} 
	Suppose that $\Omega$ admits a rotationally invariant Cheeger set $C$. Then each connected component of $\partial C \cap \Omega$ is a part of a nodoid.
\end{prop}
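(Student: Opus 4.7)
The plan is to exploit the classification of Delaunay surfaces from Subsection \ref{sec:delaunay} together with the geometric constraint that the hypothesis $\min_s \eta(s) \geq (n-1)/h(\Omega) = 1/H$ forces every point of $\Omega$ to lie at distance at least $1/H$ from the axis of symmetry, where $H = h(\Omega)/(n-1)$ is, by \eqref{eq:H}, the mean curvature of the free boundary.

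First, I would invoke Lemma \ref{lem:1}: since $C$ is rotationally invariant, the free boundary $\partial C \cap \Omega$ is analytic, and each of its connected components is therefore generated by a Delaunay curve with constant mean curvature $H > 0$. As already observed in the paragraph preceding Theorem \ref{thm:main}, positivity of $H$ excludes catenoids and vertical hyperplanes, so the only Delaunay types that a priori need to be considered are the cylinder, unduloids, spheres, and nodoids.

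Second, I would establish the key geometric inequality: any point $p$ of the open set $\Omega$ has distance from the $x$-axis at least $\min_s \eta(s) \geq 1/H$. This is because $\Omega$ is the interior region bounded by the closed curve $\Gamma$ lying in the open upper half-plane $\mathbb{R} \times (0,+\infty)$; moving vertically downward from $p$ one must exit through $\Gamma$ at some point whose ordinate is at most $y(p)$ and at least $\min_s \eta(s)$. Consequently, if a curve $\gamma(s) = (x(s), y(s))$ generates a connected component of $\partial C \cap \Omega$, then $y(s) \geq 1/H$ throughout.

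Finally, I would rule out the three remaining candidates using this lower bound. A cylinder has generating ordinate identically equal to $(n-2)/((n-1)H)$, which is strictly less than $1/H$, contradicting $y \geq 1/H$. For an unduloid, estimate \eqref{eq:unduloid_maximum} yields $\max_s y(s) < 1/H$, giving the same contradiction. For a sphere of radius $1/H$ centered on the $x$-axis, the generating curve is a semicircle satisfying $y \leq 1/H$ with equality only at its topmost point, so any connected piece of positive length must contain points with $y < 1/H$, again contradicting the bound. Thus every connected component of $\partial C \cap \Omega$ must be a piece of a nodoid. The argument is essentially a check against the explicit list in Section \ref{sec:delaunay}; the only slightly delicate case is the sphere, which is handled by the observation that the bound $y \leq 1/H$ is strict away from an isolated point.
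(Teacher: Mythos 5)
Your proof is correct and takes essentially the same route as the paper, whose own argument is just the remark preceding the proposition: by \eqref{eq:unduloid_maximum} unduloids (and the cylinder) of mean curvature $H=h(\Omega)/(n-1)$ lie strictly below height $\frac{1}{H}$, while the Delaunay sphere has radius $\frac{1}{H}$ and is centered on the axis, so none of these can meet a domain all of whose points are at distance at least $\frac{n-1}{h(\Omega)}$ from the axis. Your extra details (the vertical-ray argument showing every point of $\Omega$ lies above $\min_s\eta(s)$, and the explicit elimination of the cylinder and of a positive-length spherical arc) are accurate fillings-in of what the paper leaves implicit.
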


\begin{rem}
	Notice that Proposition \ref{lem:unlul-nonexist} does not require $C$ to be generated by a convex regular curve.
	Moreover, the assumption \eqref{eq:prop_undul0} can be guaranteed if the following, easily verifiable estimate holds true:
	\begin{equation}\label{eq:prop_undul02}
	\min_{\alpha \leq s \leq \beta} \eta(s) \geq \frac{n-1}{n} \left(\frac{|\Omega|}{\omega_n}\right)^\frac{1}{n},
	\end{equation} 
	where $\omega_n$ is the volume of a unit ball in $\mathbb{R}^n$. 
	Indeed, \eqref{eq:prop_undul02} yields \eqref{eq:prop_undul0} due to the Faber-Krahn inequality
	$$
	h(\Omega) \geq h(B) = n \left(\frac{\omega_n}{|\Omega|}\right)^\frac{1}{n},
	$$
	where $B$ is a ball such that $|B|=|\Omega|$, see, e.g., \cite[Corollary 15]{kawohlfridman}.
\end{rem}

\section{Examples}\label{sec:examples}
In this section, we study the Cheeger problem in cylinders, cones, and double cones, and calculate values of the Cheeger constant in several particular cases.

\subsection{Cylinders}\label{sec:cylinder}
Consider the $n$-dimensional cylinder
$$
Z_{l,r} := (0,l) \times B_{r}(0)
\quad \text{for some}~ l, r>0,
$$
where $B_r(0) \subset \mathbb{R}^{n-1}$ is the open ball of radius $r$ centred at the origin. 
We know from Lemma \ref{propertiesconvex} that there exists a unique Cheeger set $C$ of $Z_{l,r}$, which is convex, rotationally invariant, and with boundary of class $\mathcal{C}^{1,1}$. 
The symmetry of $Z_{l,r}$ with respect to the hyperplane $\{x=\frac{l}{2}\}$ and the uniqueness of $C$ imply that $C$ is also symmetric with respect to $\{x=\frac{l}{2}\}$.

The generating curve $\Gamma$ of $Z_{l,r}$ consists of three segments:  $\{0\} \times [0,r]$, $[0,l] \times \{r\}$, and $\{l\} \times [0,r]$. 
Therefore, in view of Theorem \ref{thm:main} and the regularity of $C$, the generating curve $\gamma=(x(s),y(s))$, $s \in [a,b]$, of $C$ smooths both corners of $\Gamma$ either by circular arcs or by parts of a nodoid. In particular, the free boundary $\partial C \cap Z_{l,r}$ has nonempty interior. 
We have 
\begin{equation}\label{eq:minxmaxx}
\min_{a \leq s \leq b} x(s) = 0
\quad \text{and} \quad 
\max_{a \leq s \leq b} x(s) = l,
\end{equation}
since otherwise we could shift the generating curve along the $x$-axis and get a contradiction to the uniqueness of $C$.
Moreover, 
\begin{equation}\label{eq:minxmaxy}
\min_{a \leq s \leq b} y(s) = 0
\quad \text{and} \quad 
\max_{a \leq s \leq b} y(s) = r.
\end{equation}
The first equality trivially follows from the convexity of $C$, and the second equality follows again from the uniqueness of $C$.  
Let us show that only parts of a nodoid can constitute $\partial C \cap Z_{l,r}$. 
Suppose, by contradiction, that the angles of $\Gamma$ are smoothed by circular arcs. In view of \eqref{eq:minxmaxx} and \eqref{eq:minxmaxy}, $\gamma$ consists of the horizontal line $[r,l-r] \times \{r\}$ and two circular arcs of radius $r$ and angle $\frac{\pi}{2}$ joining this horizontal line with the $x$-axis. That is, using \eqref{eq:H}, the fact that the mean curvature of a sphere of radius $r$ equals $\frac{1}{r}$, and explicit formulas for the surface areas and volumes of a ball of radius $r$ and cylinder $Z_{l-2r,r}$, we get
\begin{equation}\label{eq:cyl:sphere}
h(\Omega) = \frac{n-1}{r} = \frac{n \omega_{n} r^{n-1} + (n-1) \omega_{n-1}r^{n-2}(l-2r)}{\omega_{n} r^n + \omega_{n-1} r^{n-1} (l-2r)}.
\end{equation}
Here and below, $\omega_k$ stands for the volume of a unit ball in $\mathbb{R}^k$, $k \geq 2$.
However, it is not hard to see that the second equality in \eqref{eq:cyl:sphere} is impossible.

\begin{figure}[ht]
	\centering
	\includegraphics[height=3.8cm]{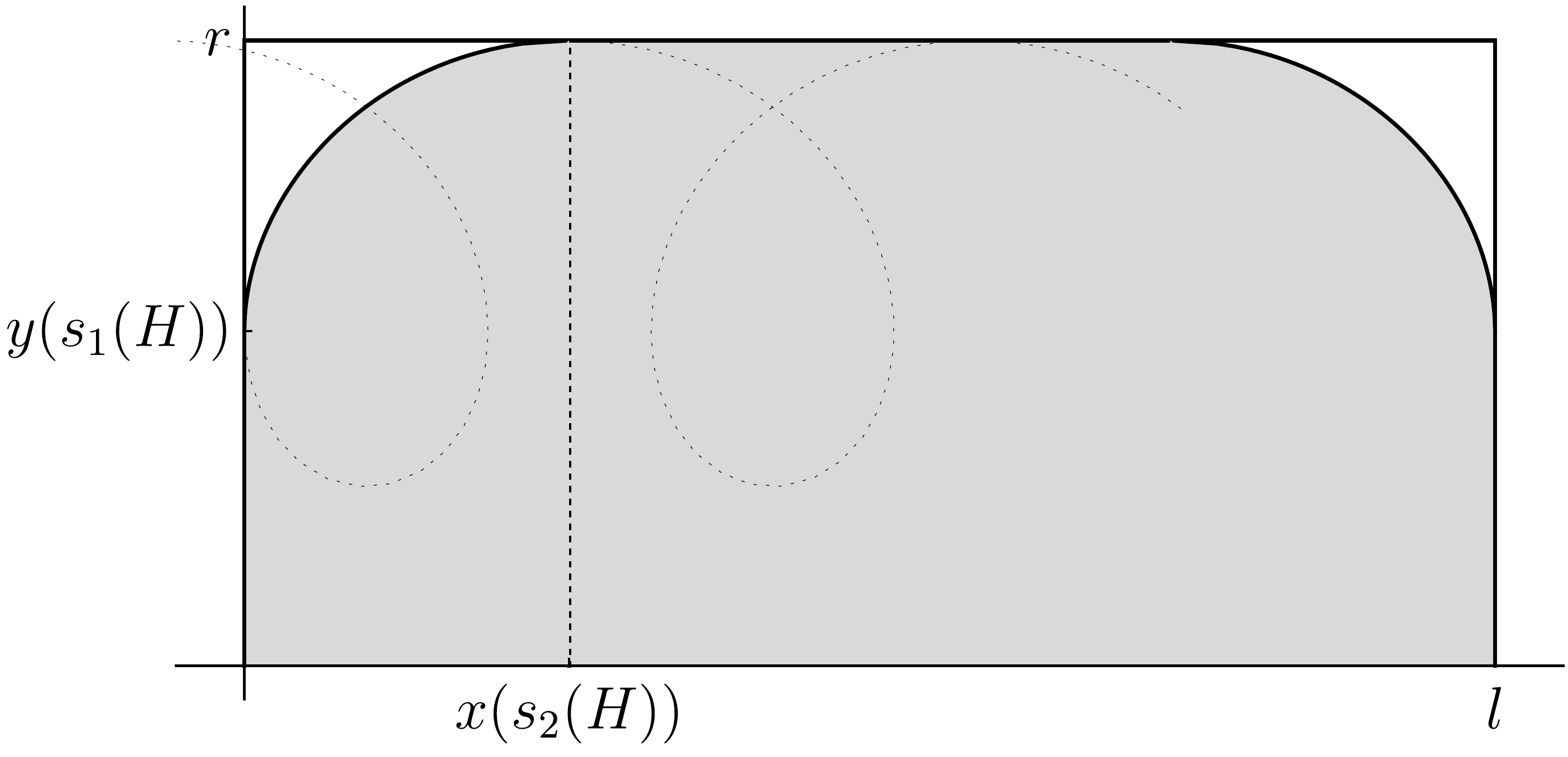}
	\caption{$n=3$. The generating set (gray) of the Cheeger set of $Z_{l,r}$ with $l=3$ and $r=1$.}
	\label{fig:cyl1}
\end{figure}

Therefore, we have shown that the generating curve of $C$ consists of two vertical segments, one horizontal segment, and two portions of a nodoid joining them, see Figure \ref{fig:cyl1}.
Consider now all candidates for the Cheeger set having the same geometric structure as $C$, and notice that each such candidate is uniquely determined by the choice of the mean curvature. The range $\mathcal{I}$ of admissible mean curvatures is dictated by the geometric admissibility of candidates.
Thus, the Cheeger constant $h(Z_{l,r})$ can be characterized through the smallest positive root of the following equation in the variable $H$:
\begin{equation}\label{cyl1:H}
(n-1)H = 
\frac{S_0(H)+S_1(H)+S_2(H)}{V_1(H)+V_2(H)},
\quad H \in \mathcal{I}.
\end{equation}
Equivalently, $h(Z_{l,r})$ can be found as the minimizer of the right-hand side of \eqref{cyl1:H} with respect to $H \in \mathcal{I}$.
Here, for suitable points $s_1(H)$ and $s_2(H)$ (see Figure \ref{fig:cyl1}), $S_0(H)$ denotes the volume of $(n-1)$-dimensional ball of radius $y(s_1(H))$, $S_1(H)$ and $V_1(H)$ stand for the surface area and volume of the portion of the nodoid of mean curvature $H$ generated by the curve $(x(s),y(s))$, $s \in [s_1(H),s_2(H)]$, and, finally, $S_2(H)$ and $V_2(H)$ are the lateral surface area and volume of the cylinder $Z_{l/2-x(s_2(H)),r}$. 
(We define all these quantities on the left half of $Z_{l,r}$ due to the symmetry of candidates with respect to $\{x=\frac{l}{2}\}$.)
More precisely, we have 
\begin{align*}
S_0(H) &= \omega_{n-1} y(s_1(H))^{n-1},\\
S_2(H) &= (n-1) \omega_{n-1} r^{n-2}\left(\frac{l}{2}-x(s_2(H))\right),\\
V_2(H) &= \omega_{n-1} r^{n-1} \left(\frac{l}{2}-x(s_2(H))\right).
\end{align*}
As for $S_1(H)$ and $V_1(H)$, it is convenient to parametrize $(x(s),y(s))$ for $s \in [s_1(H),s_2(H)]$ as $(x,x(y))$, where 
\begin{equation*}\label{eq:cyl0}
x(y) = \int_{y(s_1(H))}^y \left[\left(\frac{t^{n-2}}{T+Ht^{n-1}}\right)^2-1\right]^{-1/2} \, dt,
\end{equation*}
see \eqref{cyl1:hy15} and \eqref{cyl1:hy2}.
Hence, we have
\begin{align*}
S_1(H) &= (n-1)\omega_{n-1} \int_{y(s_1(H))}^r y^{n-2} (1+x'(y)^2)^{1/2} \, dy,\\
V_1(H) &= 
\omega_{n-1} 
x(s_2(H))r^{n-1}
-
(n-1)\omega_{n-1}\int_{y(s_1(H))}^r y^{n-2} x(y) \, dy.
\end{align*}
see, e.g., \cite[(5.3) and (4.3)]{AA}.

Several explicit values of $h(Z_{l,r})$ for different choices of $n$, $l$, and $r$ are listed in Tables \ref{tab:cyl1}, \ref{tab:cyl2}, and \ref{tab:cyl3}.

\begin{table}[!h]
	\centering
	\caption{Values of $H$ and $h(Z_{l,r})$ for $l=1$, $r=1$, and different $n$.}
	\begin{tabular}{| c || c | c | c | c | c | c | c | c| c |}
		\hline
		$n = $  & 3 & 4	& 5	& 10 & 30 \\
		\hline
		\hline
		$H \approx $ 		  & 1.86237 & 1.53976 & 1.38214 & 1.13465 & 1.02474 \\
		\hline
		$h(Z_{l,r}) \approx $ & 3.72474 & 4.61928 & 5.52854 & 10.2118 & 29.7175 \\
		\hline
	\end{tabular}
\label{tab:cyl1}
\end{table}

\begin{table}[!h]
	\centering
	\caption{Values of $H$ and $h(Z_{l,r})$ for $l=2$, $r=1$, and different $n$.}
	\begin{tabular}{| c || c | c | c | c | c | c | c | c| c |}
		\hline
		$n = $  & 3 & 4	& 5	& 10 & 30 \\
		\hline
		\hline
		$H \approx $ 		  & 1.40106 & 1.24549 & 1.17083 & 1.05746 & 1.01027 \\
		\hline
		$h(Z_{l,r}) \approx $ & 2.80212 & 3.73646 & 4.68334 & 9.51714 & 29.2978 \\
		\hline
	\end{tabular}
\label{tab:cyl2}
\end{table}

\begin{table}[!h]
	\centering
	\caption{Values of $H$ and $h(Z_{l,r})$ for $l=3$, $r=1$, and different $n$.}
	\begin{tabular}{| c || c | c | c | c | c | c | c | c| c |}
		\hline
		$n = $  & 3 & 4	& 5	& 10 & 30 \\
		\hline
		\hline
		$H \approx $ 		  & 1.25659 & 1.15544 & 1.10738 & 1.03555 & 1.00634 \\
		\hline
		$h(Z_{l,r}) \approx $ & 2.51318 & 3.46631 & 4.42954 & 9.31991 & 29.184 \\
		\hline
	\end{tabular}
\label{tab:cyl3}
\end{table}

\subsection{Double cones}
Define a double cone $K_{l,r,\theta}$ as a rotationally invariant domain in $\mathbb{R}^n$ whose generating curve $\Gamma$ bounds the triangle with basis $[-l,r] \times \{0\}$, $l,r>0$, the left angle $\theta \in (0,\frac{\pi}{2})$, and the right angle  $\varphi = \arctan(\frac{l}{r}\tan \theta)$.
That is, $\Gamma$ consists of two segments
\begin{equation}\label{eq:cone:lines}
y = (l+x) \tan \theta 
\quad \text{for} \quad x \in [-l,0],
\quad \text{and} \quad 
y = \frac{l}{r} (r-x) \tan \theta 
\quad \text{for} \quad x \in [0,r],
\end{equation}
which intersect at $x=0$, see Figure \ref{fig:double_cone1}.

\begin{figure}[ht]
	\centering
	\includegraphics[height=4.5cm]{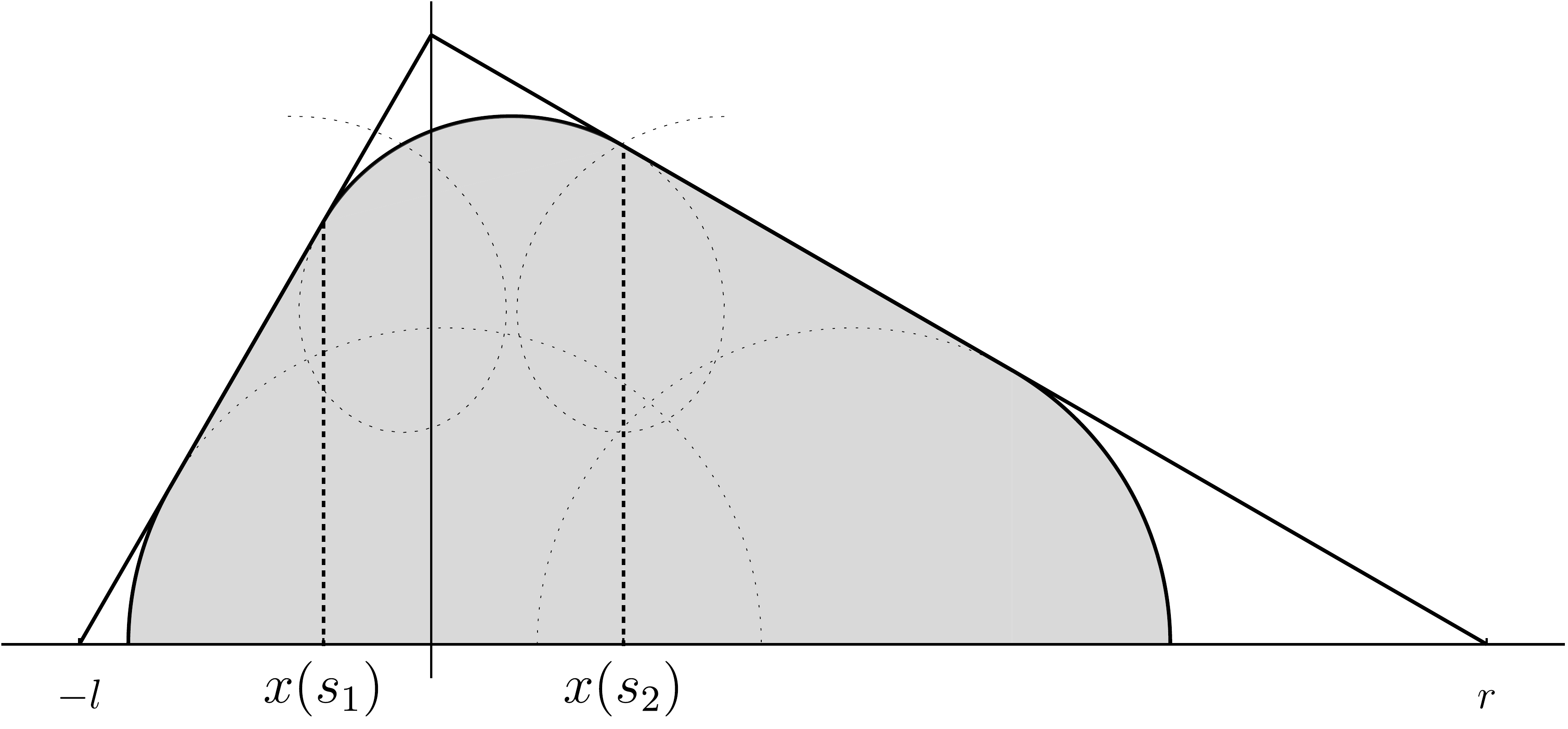}
	\caption{$n=3$. The generating set (gray) of the Cheeger set of $K_{l,r,\alpha}$ with $l=1$, $r=3$, and $\theta=\frac{\pi}{3}$.}
	\label{fig:double_cone1}
\end{figure}

Lemma \ref{propertiesconvex} implies that $K_{l,r,\theta}$ possesses a unique Cheeger set $C$, which is convex, rotationally invariant, and with boundary of class $\mathcal{C}^{1,1}$. 
Moreover, Theorem \ref{thm:main} and the regularity of $C$ yield that the generating curve $\gamma=(x(s),y(s))$, $s \in [a,b]$, of $C$ smooths all three corners of $\Gamma$ either by circular arcs or by parts of nodoids. 
In particular, the free boundary $\partial C \cap K_{l,r,\theta}$ has nonempty interior. 
In view of the properties of Delaunay surfaces (see Section \ref{sec:delaunay}), $\gamma$ can smooth the left and right corners of $\Gamma$ only by circular arcs. Let us show that $\gamma$ smooths the middle corner of $\Gamma$ by a part of a nodoid. Suppose, by contradiction, that $\gamma$ is again a circular arc near the middle corner of $\Gamma$. Then $\gamma$ has to be a half-circle with starting and ending points on the $x$-axis, that is, $C$ is a ball $B_R$ of radius $R=\frac{1}{H}$. However, this is impossible since
$$
\frac{n-1}{R} = (n-1)H = h(K_{l,r,\theta}) = \frac{P(B_R)}{|B_R|} = \frac{n}{R}.
$$

Therefore, we have shown that the generating curve of $C$ consists of two circular arcs connecting the $x$-axis with the lines \eqref{eq:cone:lines}, two segments, each of which belongs to one of the lines \eqref{eq:cone:lines}, and a portion of a nodoid joining the last two segments, see Figure \ref{fig:double_cone1}.
As in Section \ref{sec:cylinder}, consider now all candidates for the Cheeger set having the same geometric structure, and denote by $\mathcal{I}$ the range of mean curvatures for which the candidates are geometrically admissible.
Notice, however, that for any fixed $H \in \mathcal{I}$ the candidate is not necessarily unique. 
(Numerical analysis indicates that there are at most two candidates associated with a fixed $H$.)
Thus, one can characterize the Cheeger constant of $K_{l,r,\alpha}$ as 
\begin{equation}\label{cone:H}
h(K_{l,r,\alpha}) = \inf_{H \in \mathcal{I}} \inf_{T < 0} 
\frac{S_1(H)+S_2(H)+S_3(H,T)+S_4(H,T)+S_5(H,T)}{V_1(H)+V_2(H)+V_3(H,T)+V_4(H,T)+V_5(H,T)}.
\end{equation}
Here $S_1(H)$, $S_2(H)$, $V_1(H)$, and $V_2(H)$ denote the lateral surface areas and volumes of the left and right spherical caps, $S_3(H,T)$, $S_4(H,T)$, $V_3(H,T)$, and $V_4(H,T)$ stand for the lateral surface areas and volumes of the conical frustums generated by the segments of lines \eqref{eq:cone:lines} which join the spherical caps and the portion of the nodoid of mean curvature $H$ and parameter $T$ from \eqref{eq:firstint}, and, finally, $S_5(H,T)$ and $V_5(H,T)$ are the surface area and volume of that portion of the nodoid.

Let us discuss more precisely the quantities in \eqref{cone:H} for a fixed $H$ 	in the three-dimensional case using the parametrization \eqref{eq:kenmotsu} with a displacement $c \in \mathbb{R}$ along the $x$-axis:
\begin{equation}\label{eq:cone:param}
(x(s),y(s)) = \left(\int_0^s \frac{1+B \cos 2Ht}{(1+B^2+2B\cos 2Ht)^{1/2}} \, dt+c,\frac{(1+B^2+2B\cos 2Hs)^{1/2}}{2H}\right).
\end{equation}
In this case, the parameter $B$ is a reparametrization of the parameter $T$.

First, we find $s_1<0$, $s_2>0$, and $B, c \in \mathbb{R}$ such that the nodoid $(x(s),y(s))$ smooths the corner between the lines \eqref{eq:cone:lines} for $s \in [s_1,s_2]$, as the solutions of the following system of four equations:
\begin{align}
\label{eq:ys1}
&y(x(s_1)) = \frac{(1+B^2+2B\cos 2Hs_1)^{1/2}}{2H} = (l+x(s_1))\tan \theta,\\
\label{eq:ys0}
&\frac{dy}{dx}(x(s_1)) = -\frac{B \sin(2Hs_1)}{1+B\cos(2Hs_1)}= \tan \theta,
\end{align}
and 
\begin{align}
\label{eq:ys2} 
&y(x(s_2)) =\frac{(1+B^2+2B\cos 2Hs_2)^{1/2}}{2H} = \frac{l}{r}(r-x(s_2))\tan \theta,\\
\label{eq:ys3}
&\frac{dy}{dx}(x(s_2)) = -\frac{B \sin(2Hs_2)}{1+B\cos(2Hs_2)}= -\frac{l}{r}\tan \theta.
\end{align}
From \eqref{eq:ys0} and \eqref{eq:ys3}, we get
\begin{align*}
s_1 &= -\frac{1}{H}\arctan{\left(\frac{\sqrt{(B^2-1)\tan^2 \theta+B^2}-B}{(B-1) \tan \theta} \right)},\\
s_2 &= \frac{1}{H}\arctan{\left(\frac{\sqrt{l^2 (B^2-1) \tan^2 \theta + B^2r^2}-Br}{l(B-1)\tan \theta} \right)}.
\end{align*}
Then, substituting $s_1$ and $s_2$ into \eqref{eq:ys1} and \eqref{eq:ys2}, respectively, we obtain $B$ and $c$. Recall that the roots $B$ and $c$ are not necessarily unique. 
With the knowledge of the parameters $s_1$, $s_2$, and $B$, we have
\begin{align*}
S_3(H,B) &= \frac{\pi}{H} \int_{s_1}^{s_2} (1+B^2+2B\cos 2Ht)^{1/2} \, dt,\\
V_3(H,B) &= \frac{\pi}{4H^2} \int_{s_1}^{s_2} (1+B \cos 2Ht)(1+B^2+2B\cos 2Ht)^{1/2} \, dt.
\end{align*}

Second, we round the left and right corners by circular arcs of radius $R = \frac{1}{H}$.
It is not hard to see that these arcs are centred at
$$
\left(-l + \frac{R}{\sin \theta}, 0\right)
\quad \text{and} \quad
\left(r - \frac{R}{\sin \varphi}, 0\right).
$$
Moreover, the arcs touch the lines \eqref{eq:cone:lines} at the points
$$
(\hat{x}_1,\hat{y}_1) = \left(-l + \frac{R \cos^2 \theta}{\sin \theta}, R \cos \theta\right)
\quad \text{and} \quad 
(\hat{x}_2,\hat{y}_2) = \left(r - \frac{R \cos^2 \varphi}{\sin \varphi}, R \cos \varphi\right).
$$
Therefore, we get  
\begin{align*}
&S_1(H) = 2 \pi R^2(1-\sin \theta),
\quad
&&S_2(H) = 2 \pi R^2(1-\sin \varphi),\\
&V_1(H)  = \frac{\pi R^3}{3}\left(2-3 \sin \theta + \sin^3 \theta\right),
\quad 
&&V_2(H) = \frac{\pi R^3}{3}\left(2-3 \sin \varphi + \sin^3 \varphi\right).
\end{align*}

Finally, the remaining quantities are given by 
\begin{align*}
S_4(H,B) &= 
\frac{\pi \tan \theta}{\cos \theta} \left((l+x(s_1))^2 - (l+\hat{x}_1)^2\right),\\
S_5(H,B) &=  
\frac{l \pi \tan \theta}{r^2}\sqrt{r^2+l^2 \tan^2 \theta} \left((r-\hat{x}_2)^2-(r-x(s_2))^2\right),
\end{align*}
and 	
\begin{align*}
V_4(H,B) &= 
\frac{\pi \tan^2 \theta}{3} \left((l+x(s_1))^3 - (l+\hat{x}_1)^3\right),\\
V_5(H,B) &= 
\frac{\pi l^2 \tan^2 \theta}{3r^2} \left((r-\hat{x}_2)^3-(r-x(s_2))^3\right).
\end{align*}

Several explicit values of $h(K_{l,r,\theta})$ for $n=3$ and different choices of $l$, $r$, and $\theta$ are listed in Table \ref{tab:doublecone}.

\begin{table}[!h]
	\centering
	\caption{Values of $h(K_{l,r,\theta})$ for $n=3$ and different parameters.}
	\begin{tabular}{| c || c | c | c | c | c | c | c | c | c | c |}
		\hline
		$l = $  & 9/5	  & 1 	& 1	& 1		  & 1	&1	 \\
		\hline
		$r = $ 	& 16/5	  & 3 	& 1	& 1 	  & 1 	&1	 \\
		\hline
		$\theta = $ & $\arcsin(4/5)$  & $\pi/3$ & $2\pi/5$ & $\pi/3$ & $\pi/4$ & $\pi/6$  \\
		\hline
		\hline
		$h(K_{l,r,\theta}) \approx $ & 1.6502 & 2.22333 & 2.38303 & 3.00582 & 4.00593 & 5.75003  \\
		\hline
	\end{tabular}
\label{tab:doublecone}
\end{table}

\subsection{Cones}\label{sec:cone}
We define the cone $K_{l,\theta}$ as the rotationally invariant domain in $\mathbb{R}^n$ whose generating curve $\Gamma$ bounds the right triangle with leg $[-l,0] \times \{0\}$, $l>0$, and the left angle $\theta \in (0,\frac{\pi}{2})$. 
That is, $\Gamma$ consists of two segments
\begin{equation*}\label{eq:simplecone:lines}
y = (l+x) \tan \theta 
\quad \text{for} \quad x \in [-l,0],
\quad \text{and} \quad 
x = 0 
\quad \text{for} \quad y \in [0, l \tan \theta],
\end{equation*}
see Figure \ref{fig:cone1}. 
Observe that the cone $K_{l,\theta}$ can be obtained as the limit case of the double cone $K_{l,r,\theta}$ for $r=0$. Again by Lemma  \ref{propertiesconvex}, the unique Cheeger set $C$ of $K_{l,\theta}$ is convex, rotationally invariant, and with boundary of class $\mathcal{C}^{1,1}$. Arguing as in the previous subsection, we deduce that $\gamma$, the generating curve of $C$, smooths the left corner of $\Gamma$ by a circular arc, and the top corner of $\Gamma$ by a portion of a nodoid. 
It is then possible to characterize $h(K_{l,\theta})$ as the infimum of ratio of the perimeter over volume of the candidates for the Cheeger set as in \eqref{cone:H}. 
Several explicit values of $h(K_{l,\theta})$ for $n=3$ and different choices of $l$ and $\theta$ are listed in Table \ref{tab:cone}.

\begin{figure}[ht]
	\centering
	\includegraphics[height=4.5cm]{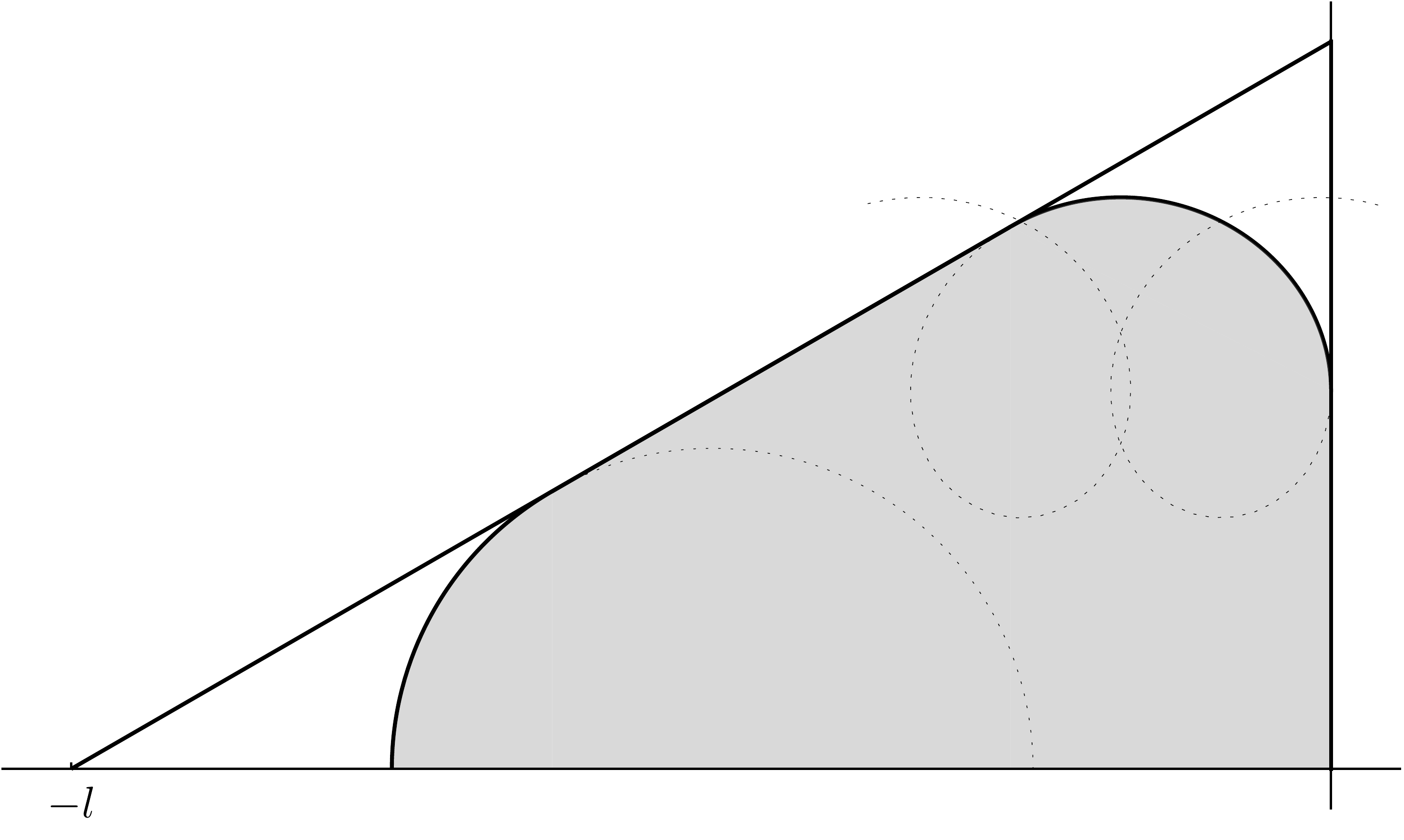}
	\caption{$n=3$. The generating set (gray) of the Cheeger set of $K_{l,\alpha}$ with $l=1$ and $\theta=\frac{\pi}{6}$.}
	\label{fig:cone1}
\end{figure}

\begin{table}[!h]
	\centering
	\caption{Values of $h(K_{l,\theta})$ for $n=3$ and different parameters.}
	\begin{tabular}{| c || c | c | c | c | c | c | c | c | c |}
		\hline
		$l = $  & 4 	& 3 & 1	  & 1 	& 1			  	 \\
		\hline
		$\theta = $ & $\arcsin(3/5)$ & $\arcsin(4/5)$ & $\pi/3$ & $\pi/4$ & $\pi/6$        \\
		\hline
		\hline
		$h(K_{l,r,\theta}) \approx $ & 1.69452 & 1.71916 & 4.6575  & 5.86018 & 7.85898      \\
		\hline
	\end{tabular}
\label{tab:cone}
\end{table}

\section{On the presence of unduloids and cylinders}
In Theorem \ref{thm:main} and Propositions \ref{prop:unlul-nonexist1} and \ref{lem:unlul-nonexist}, we have shown that, under several assumptions, the free boundary of a Cheeger set of a rotationally invariant domain cannot have a portion of an unduloid or a cylinder as its part. 
It is therefore natural to ask whether there exists a rotationally invariant domain such that a portion of an unduloid or a cylinder can appear in the free boundary of its rotationally invariant Cheeger set. 
In this section, we provide numerical evidence of the existence of such domain.

Consider $\Omega \subset \mathbb{R}^3$ generated by a curve $\Gamma$ which is symmetric with respect to the line $\{x=0\}$ and which is defined in $\mathbb{R}^+ \times \mathbb{R}^+$ by the union of three segments 
\begin{align}
\label{eq:undul:segment1}
y&=-\frac{B-D}{C} x + B
\quad \text{for} \quad x \in [0,C],\\
\label{eq:undul:segment2}
y&=\frac{B-D}{A-C} (x-C) + D
\quad \text{for} \quad x \in [C,A],\\
\notag
x&=A
\quad \text{for} \quad y \in [0,B],
\end{align}
where $A, B, C, D >0$ are some constants such that $A>C$ and $B>D$, see Figure \ref{fig:undul}. 
By construction, $\Omega$ is Schwarz symmetric with respect to the $x$-axis, and hence $\Omega$ possesses a rotationally invariant Cheeger set $C$, see Section \ref{sec:cheeger}. 
Noting, moreover, that the union of Cheeger sets is again a Cheeger set (see, e.g., \cite[Proposition 3.5 (vi)]{leonardi}), and taking the union of $C$ with its reflection with respect to the plane $\{x=0\}$, we can assume that $C$ is symmetric with respect to $\{x=0\}$.

\begin{figure}[ht]
	\centering
	\includegraphics[width=0.7\linewidth]{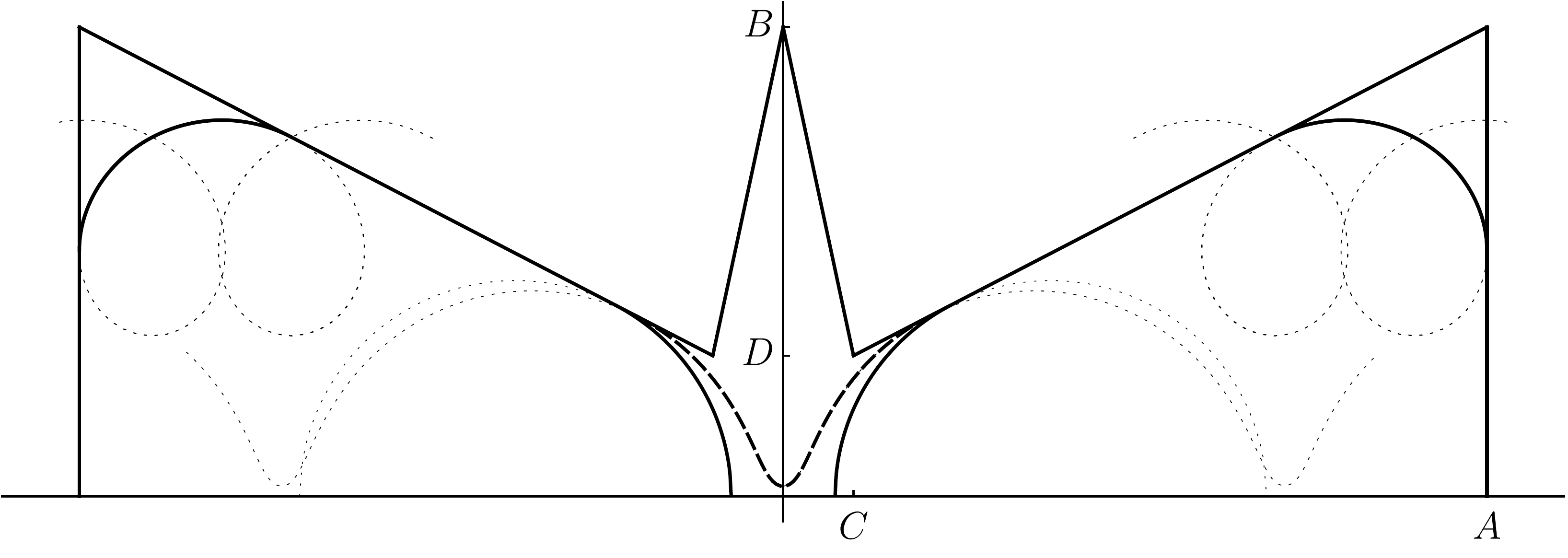}
	\caption{The generating curve of $\Omega$ with $A=3$, $B=2$, $C=0.3$, $D=0.6$, and the generating curves of two nonoptimal candidates for the Cheeger set prolonged by dotted lines. 
	The candidate $C_1$ with circular arcs (case \ref{case:undul4}) has $\frac{P(C_1)}{|C_1|} \approx 2.1742$, and the candidate $C_2$ with the dashed unduloid (case \ref{case:undul3}) has $\frac{P(C_2)}{|C_2|} \approx 2.17616$.}
	\label{fig:undul}
\end{figure}

Since the generating curve $\Gamma$ of $\Omega$ has three convex corners at the points $(A,B)$, $(0,B)$, and $(-A,B)$, 
the generating curve $\gamma$ of $C$ smooths them. This fact can be obtained arguing by contradiction: first, straightforward calculations show that the ratio $\frac{P(C)}{|C|}$ diminishes after truncation of a convex corner by a horizontal segment which is sufficiently close to the top of the corner, and then Proposition \ref{prop:reg} \ref{prop:reg:2} implies smoothness of $\gamma$.
Noting that among admissible Delaunay surfaces only nodoids and spheres have vertical tangents, we see that the corners of $\Gamma$ at the points $(A,B)$ and $(-A,B)$ are smoothed either by portions of a nodoid or by circular arcs. 
As for the behaviour of $\gamma$ near the corner of $\Gamma$ at the point $(0,B)$, there are several possibilities:
\begin{enumerate}[label={\rm(\roman*)}]
	\item\label{case:undul1} there is a portion of a Delaunay surface, different from the cylinder, which is inscribed in the convex corner at the point $(0,B)$ ($\gamma$ is tangent to the segment \eqref{eq:undul:segment1} and its reflection with respect to $\{x=0\}$);
	\item\label{case:undul2} there is a portion of a Delaunay surface which passes through the points $(C,D)$ and $(-C,D)$;
	\item\label{case:undul3} there is a portion of an unduloid which connects in the $\mathcal{C}^1$-fashion the segment \eqref{eq:undul:segment2} with its reflection with respect to $\{x=0\}$;
	\item\label{case:undul4} there is a circular arc which connects the segment \eqref{eq:undul:segment2} with the $x$-axis. In this case, $C$ consists of two connected components. 
\end{enumerate}

Following the methodology of Section \ref{sec:examples}, the Cheeger constant $h(\Omega)$ can be found by minimizing the ratio of the perimeter over volume of candidates for the Cheeger set which are defined by cases \ref{case:undul1}-\ref{case:undul4}. 
We performed corresponding numerical computations with $A=3$, $B=2$, $C=0.3$, and varying $D$.
The results suggest that there exist critical values $D_1 \approx 0.42312$, $D_2 \approx 0.44163$, $D_3 \approx 1.1216$, and $D_4 \approx 1.9282$ such that the generating curve $\gamma$ of $C$ behaves as follows:
\begin{enumerate}[label={\rm(\Roman*)}]
	\item for $D \in (0,D_1]$, case \ref{case:undul4} occurs;
	\item for $D \in [D_1,D_2)$, case \ref{case:undul2} occurs, where the surface is an unduloid having a point of minimum at $x=0$;
	\item for $D = D_2$, case \ref{case:undul2} occurs, where the surface is a cylinder;
	\item\label{case:I} for $D \in (D_2,D_3)$, case \ref{case:undul2} occurs, where the surface is an unduloid having a point of maximum at $x=0$;
	\item for $D = D_3$, case \ref{case:undul2} occurs, where the surface is a sphere;
	\item for $D \in (D_3,D_4]$, case \ref{case:undul2} occurs, where the surface is a nodoid;
	\item for $D \in [D_4, B)$, case \ref{case:undul1} occurs, where the surface is a nodoid.
\end{enumerate}
In particular, case \ref{case:undul3} was not observed.
On Figures \ref{fig:undul2} and \ref{fig:undul}, we depict the Cheeger set and nonoptimal candidates for the Cheeger set, respectively, by choosing $D=0.6$ which corresponds to case \ref{case:I}.

 \begin{figure}[ht]
	\centering
	\includegraphics[width=0.7\linewidth]{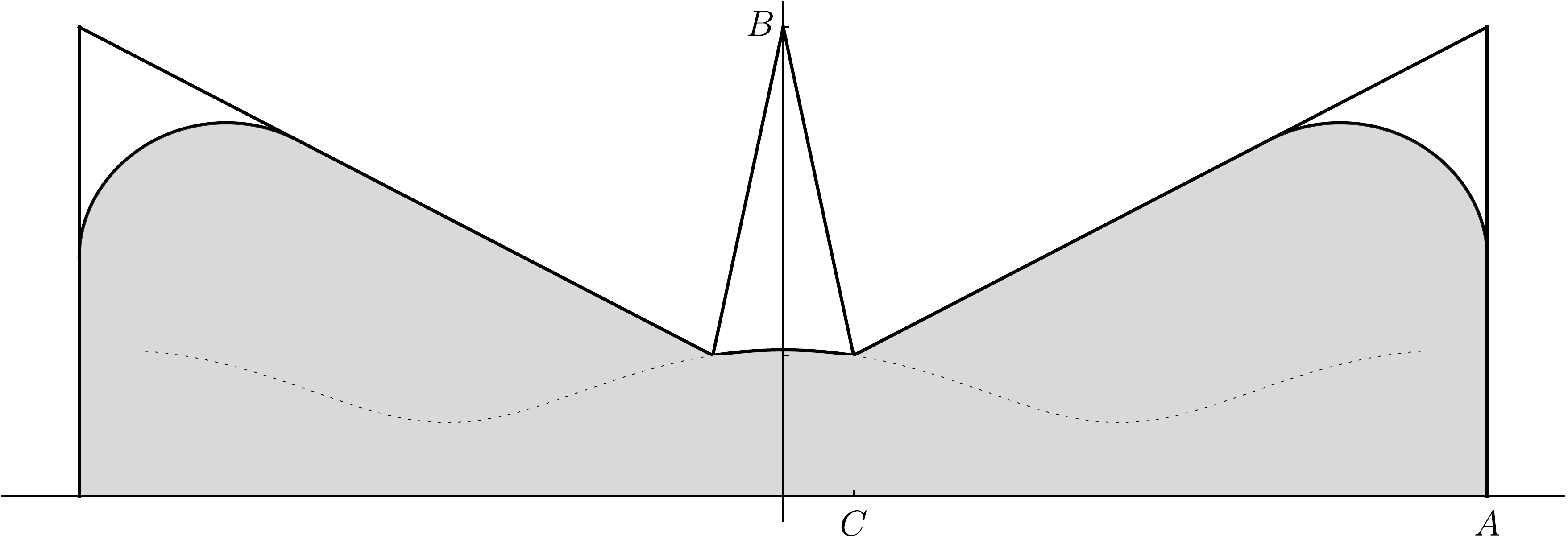}
	\caption{The generating set (gray) of the Cheeger set of $\Omega$ with $A=3$, $B=2$, $C=0.3$, $D=0.6$, and $h(\Omega) \approx 2.13324$.}
	\label{fig:undul2}
\end{figure}

\section{Comments and remarks}

\begin{rem}
The problem of finding a rotationally invariant Cheeger set amounts to determine the solution of a weighted Cheeger problem in a set $D \subset \R \times \R^+$, where the weighted perimeter and volume are given, for every $E \subset \R \times \R^+$, by
\[ 
P_w(E) := \int_{\partial^* E} y^{n-2}\,d\mathcal{H}^{1} 
\quad \text{and} \quad
V_w(E) := \int_{E} y^{n-2}\,dx\,dy.
\]
This kind of problem was introduced in \cite{ionesculachand} for a general class of weights which, however, does not include the case under consideration. The isoperimetric problem in $\R \times \R^+$ with the weight $y^\alpha$, $\alpha > 0$, for both perimeter and volume has been first studied in \cite{madernasalsa}.   
\end{rem}

\begin{rem}
In Proposition \ref{prop:unlul-nonexist1} we assumed that $\Omega$ admits a rotationally invariant Cheeger set whose generating curve $\gamma :[a,b] \to \R \times (0,+\infty)$ is closed, convex, and of class $\mathcal{C}^{1,1}$.
We anticipate that the existence and, moreover, uniqueness of such Cheeger set holds true provided $\Omega$ is generated by a closed, convex curve $\Gamma :[\alpha,\beta] \to \R \times (0,+\infty)$. 
In some particular cases, such as the torus in $\R^n$, this has been proven in \cite{KLV}. 
\end{rem}

\begin{rem}
	Recall that the Cheeger set of a planar, convex, bounded domain $\Omega$ can be characterized by ``rolling'' a disk $B_r(x)$ of radius $r=\frac{1}{h(\Omega)}$ inside $\Omega$, see \eqref{eq:cunion}.
	In particular, there exists at least one $x \in \Omega$ such that $B_r(x) \subset \Omega$. 
	It is then natural to wonder whether a similar characterization of the Cheeger set of a convex, rotationally invariant, bounded domain $\Omega \subset \mathbb{R}^n$ can be given, where besides balls of radius $\frac{n-1}{h(\Omega)}$ one can also use appropriately defined nodoidal ``caps'' due to Theorem \ref{thm:main}.
	It can happen, however, that no ball of radius $\frac{n-1}{h(\Omega)}$ is contained in $\Omega$, even if a spherical cap of the same radius is a connected component of the free boundary of the corresponding Cheeger set. 
	Indeed, consider a three-dimensional cone $K_{l,\theta}$ with some $l>0$ and $\theta \in (0,\frac{\pi}{2})$ defined as in Section \ref{sec:cone}. 
	Clearly, we have
	$$
	h(K_{l,\theta}) < \frac{P(K_{l,\theta})}{|K_{l,\theta}|} = \frac{\pi l^2 + \frac{\pi l^2}{\cos \theta}}{\frac{1}{3} \pi l^3 \tan \theta} = \frac{3(1+\cos \theta)}{l \sin \theta}.
	$$ 
	On the other hand, the radius of the maximal ball inscribed in $K_{l,\theta}$ equals $\frac{l \sin \theta}{1+\sin\theta}$.
	Thus, no ball of radius $\frac{2}{h(\Omega)}$ can be inscribed in $K_{l,\theta}$ provided $\frac{2 l \sin \theta}{3(1+\cos \theta)} > \frac{l \sin \theta}{1+\sin\theta}$. It is not hard to see that this inequality is satisfied for all $\theta$ sufficiently close to $\frac{\pi}{2}$, which establishes the counterexample.
		
	In the same spirit, it is also natural to ask whether a general characterization of the Cheeger set and constant of a convex domain of revolution generated by a polygonal curve can be provided, as was done in \cite[Theorem 3]{kawohllachand} for planar polygons. 
	However, such a characterization does not seem straightforward to obtain.
\end{rem}

\bigskip
\noindent
{\bf Acknowledgments.}
The essential part of the present research was performed during a visit of E.P. at the University of West Bohemia and a visit of V.B. at Aix-Marseille University. 
The authors wish to thank the hosting institutions for the invitation and the kind hospitality. 
V.B. was supported by the project LO1506 of the Czech Ministry of Education, Youth and Sports, and by the grant 18-03253S of the Grant Agency of the Czech Republic.
The authors also wish to thank the anonymous reviewer for valuable comments and suggestions.

\addcontentsline{toc}{section}{\refname}

\end{document}